\newtheorem{theorem}{Theorem}[section]
\newtheorem{lemma}[theorem]{Lemma}
\newtheorem{remark}[theorem]{Remark}
\newenvironment{proof}[1][Proof]{\noindent\textbf{#1.} }{\ \rule{0.5em}{0.5em}}
\newcommand{\B}[1]{\boldsymbol{#1}}
\begin{document}

\bigskip

\bigskip
\begin{frontmatter}

\title{  
Generalized Probability Density Approach to Histopolation Schemes of Arbitrary Order}
%% Group authors per affiliation:

\author[address-Nis]{Gradimir V. Milovanovi{\'c}\corref{corrauthor}}\cortext[corrauthor]{Corresponding author}
\ead{gvm@mi.sanu.ac.rs}

  \author[address-CS]{Federico Nudo}
  
  \ead{federico.nudo@unical.it}

  \address[address-Nis]{ Serbian Academy of Sciences and Arts (SASA), 11000 Belgrade, Serbia \& University of Ni\v s, Faculty of Sciences and Mathematics, 18000 Ni\v s, Serbia}
  \address[address-CS]{Department of Mathematics and Computer Science, University of Calabria, Rende (CS), Italy}

\begin{abstract}
In this paper, we investigate the reconstruction of a bivariate function from weighted edge integrals on a triangular mesh, a problem of central importance in tomography, computer vision, and numerical approximation. Our approach is based on local histopolation methods defined through unisolvent triples, where the edge weights are induced by probability densities. We present a general strategy that applies to arbitrary polynomial order~$k$, in which edge moments are taken against orthogonal polynomials associated with the chosen densities. This yields a systematic framework for weighted reconstructions of any degree, with theoretical guarantees of unisolvency and fully explicit basis functions. As a concrete and flexible instance, we introduce a two-parameter family of Jacobi-type distributions on $[-1,1]$, together with its symmetric Gegenbauer subclass, and show how these densities generate new quadratic reconstruction operators that generalize the standard linear histopolation scheme while preserving its simplicity and locality. We employ an adaptive parameter selection algorithm for Jacobi densities, which 
automatically tunes the distribution parameters to minimize the global reconstruction error. This strategy enhances robustness and adaptivity across different function classes and mesh resolutions. The effectiveness of the proposed operators is demonstrated through extensive numerical experiments, which confirm their superior accuracy in approximating both smooth and highly oscillatory functions. Finally, the framework is sufficiently general to accommodate any admissible edge density, thus providing a flexible and broadly applicable tool for weighted function reconstruction.
\end{abstract}

\begin{keyword}
Orthogonal polynomials\sep Local histopolation method \sep Function Reconstruction\sep Jacobi probability density function
\end{keyword}

\end{frontmatter}
\section{Introduction}

The classical framework of polynomial interpolation, traditionally based on pointwise data, 
can be generalized to settings where the available information consists of linear functionals 
acting on the target function~\cite{Rivlin}. 
When these functionals are given by integrals over geometric regions, one speaks of 
\emph{histopolation}~\cite{Robidoux}, and the corresponding approximation procedures are known as 
\emph{histopolation methods}.  

This paradigm provides a natural way to reconstruct functions from averaged data, 
a situation that frequently arises in applied mathematics and engineering. 
In many applications, direct sampling of function values is infeasible, and only integral 
measurements are available. Prominent examples are \emph{computed tomography} and 
\emph{medical imaging}, where devices record line or area integrals of the unknown field~\cite{kak2001principles,natterer2001mathematics,palamodov2016reconstruction}. 
Beyond imaging, histopolation techniques have been applied in 
\emph{computer vision}, \emph{signal and image processing}~\cite{Bosner:2020:AOC}, 
\emph{approximation theory} with curves and splines~\cite{Fischer:2005:MPR,Fischer:2007:CSP,Siewer:2008:HIS,Hallik:2017:QLR,dell2025c}, 
the study of \emph{fractal functions}~\cite{Barnsley:2023:HFF}, and the 
\emph{numerical preservation of physical quantities}~\cite{HiemstraJCP}. 
The last few years have witnessed renewed interest in these ideas~\cite{BruniErbFekete,bruni2024polynomial}, 
leading to global histopolation–regression methods in one and several variables~\cite{bruni2025polynomial,bruno2025bivariate} 
and weighted local polynomial approaches in the bivariate setting~\cite{nudo1,nudo2,dell2025reconstructing,dell2025truncated,milov1}.

A common design principle underlying these methods is the subdivision of the computational 
domain $\Omega$ into smaller elements, where local approximations are built and then combined 
to form a global representation. Depending on whether continuity is enforced across element 
boundaries, the resulting schemes fall into the categories of \emph{conforming} or 
\emph{nonconforming} methods.

A local approximation scheme can be formally described by means of a triple~\cite{Guessab:2022:SAB}
\[
\mathcal{M}_d=\left(S_d,\mathbb{F}_{S_d},\Sigma_{S_d}\right),
\]
where
\begin{itemize}
    \item $S_d$ denotes a polytope in $\mathbb{R}^d$, with $d \geq 1$;
    \item $\mathbb{F}_{S_d}$ is an $n$-dimensional trial space of functions defined on $S_d$;
    \item $\Sigma_{S_d}=\left\{\mathcal{L}_j \, : \,j=1,\dots,n\right\}$ is a set of linearly independent 
    linear functionals, referred to as the \emph{degrees of freedom}.
\end{itemize}
The triple $\mathcal{M}_d$ is called \textit{unisolvent} if the only function in $\mathbb{F}_{S_d}$ annihilated by all the functionals in $\Sigma_{S_d}$ is the zero function. 
Equivalently, there exists a unique basis 
\[
\B{B}=\left\{\varphi_i \,:\, i=1,\dots,n\right\}
\]
of $\mathbb{F}_{S_d}$ such that 
\[
\mathcal{L}_j\left(\varphi_i\right)=\delta_{ij}=
    \begin{cases}
      1, & i=j,\\[4pt]
      0, & i\neq j,
    \end{cases}, \quad i,j=1,\dots,n,
\]
where $\delta_{ij}$ is the Kronecker delta symbol. In this case, the functions $\varphi_i$, $i=1,\dots,n$ constitute the basis associated with 
the triple $\mathcal{M}_d$. When the degrees of freedom are expressed as integrals, the resulting 
approximation scheme is known as a \emph{local histopolation method}.  

A classical example is the nonconforming linear histopolation scheme on triangular meshes, denoted by $\mathcal{CH}$,
where the degrees of freedom are given by edge integrals and the trial space consists of 
piecewise linear functions. This method is computationally appealing and conceptually 
simple, but its limited approximation power prevents it from efficiently reproducing 
oscillations or sharp variations; higher accuracy can only be reached through extensive 
mesh refinement.  

The goal of this paper is to enrich local histopolation schemes by employing polynomial trial
spaces of arbitrary degree $k$, combined with weighted degrees of freedom. The weights are given by
probability densities along the edges, and the corresponding data are expressed as
$\omega$–weighted moments with respect to an orthogonal polynomial basis. This construction yields
linearly independent edge constraints and allows one to derive explicit basis functions through the
inverse of the degrees-of-freedom matrix. As a concrete example, we focus on the two-parameter
family of \emph{Jacobi-type densities}, together with the symmetric Gegenbauer subclass. Although
the framework applies to any $k\ge2$, particular attention is devoted to the quadratic case $k=2$,
where orthogonality leads to a block structure and compact analytic formulas; in the symmetric
(Gegenbauer) setting, the vanishing of odd moments further simplifies the expressions.

The paper is organized as follows. 
Section~\ref{sec1} develops the general local histopolation scheme of arbitrary order. 
In particular, we introduce the $\omega$–weighted edge moments against orthogonal polynomials, 
augment them with interior bubble functionals, prove the unisolvency of the triple $\mathcal{H}_k^\omega$, 
and derive explicit basis functions together with the corresponding reconstruction operator. 
Section~\ref{sec2} is devoted to Jacobi-type edge densities and their symmetric Gegenbauer subclass, 
including normalization and closed-form expressions for the first moments, with special emphasis on 
the quadratic case $k=2$. 
Section~\ref{sec3} addresses the adaptive parameter-selection strategy for the Jacobi family, 
based on mesh-level validation and applicable across different function classes and refinement levels. 
Finally, Section~\ref{sec4} reports numerical experiments that demonstrate the accuracy and robustness 
of the proposed methods.

\section{General local histopolation method of arbitrary order}\label{sec1}

Let $T \subset \mathbb{R}^2$ be a nondegenerate triangle with vertices 
$\B v_1, \B v_2, \B v_3$ and barycentric coordinates $\lambda_1,\lambda_2,\lambda_3$. 
These coordinates satisfy the following fundamental properties
\begin{itemize}
    \item \textbf{Affine property:}
    \begin{equation}\label{prop2}
  \lambda_i\left(t\B{x} + (1-t)\B{y}\right) 
  = t\lambda_i(\B{x}) + (1-t)\lambda_i(\B{y}), 
  \quad \B{x},\B{y}\in T,\quad t\in[0,1],
\end{equation}
\item \textbf{Partition of unity:}
  \begin{equation*}
    \sum_{i=1}^3 \lambda_i(\B{x}) = 1, 
    \quad \B{x}\in T,
  \end{equation*}
  \item \textbf{Kronecker delta property at the vertices:}
  \begin{equation*}
    \lambda_i(\B v_j) = \delta_{ij},
    \quad i,j=1,2,3.
  \end{equation*}
\end{itemize}
For simplicity, we use the notations
\begin{equation*}
    \B{v}_4 = \B{v}_1, \quad  \B{v}_5 = \B{v}_2,
\end{equation*}
as well as 
\begin{equation*}
   \lambda_4 = \lambda_1, \quad \lambda_5 = \lambda_2.  
\end{equation*}
As a consequence, we have 
\begin{equation}\label{propstar}
  \lambda_i(\B{x}) = 0, 
  \quad \B{x}\in e_i=\left[v_{i+1},v_{i+2}\right], 
  \quad i=1,2,3.
\end{equation}
We parametrize any edge $e_j$, $j=1,2,3,$ by
\begin{equation*}
\gamma_j(t) = \frac{1+t}{2}\B v_{j+1} + \frac{1-t}{2}\B v_{j+2}, 
\quad t \in [-1,1].    
\end{equation*}
For an integer $k \geq 2$, we denote by
\begin{equation*}
    \mathbb{P}_k(T)=\left\{ p(\B{x}) = \sum_{i+j \leq k} a_{ij} x^i y^j\, : \, a_{ij} \in \mathbb{R}, \ \B x=(x,y)\in T\right\}
\end{equation*}
the space of bivariate polynomials of total degree $k$. In order to construct our general enriched histopolation scheme, we let 
$\omega \in L^1([-1,1])$ be a probability density on $[-1,1]$, that is 
\begin{equation*}
\omega(t) \geq 0, \quad \int_{-1}^1 \omega(t)dt = 1.    
\end{equation*}
We denote by 
\begin{equation}\label{moment}
    m_{n,\omega}=\int_{-1}^1 t^n\omega(t) dt, \quad n\in\mathbb{N}_0=\mathbb{N}\cup \{0\} 
\end{equation}
and we assume that 
\begin{equation*}
    m_{n,\omega}<\infty, \quad n\in\mathbb{N}_0. 
\end{equation*}
We consider the family of orthogonal polynomials
\begin{equation*}
 \left\{\pi_n\right\}_{n\in\mathbb{N}_0},
\end{equation*}
with respect to the weighted inner product
\begin{equation*}
    \left\langle u,v \right\rangle_\omega := \int_{-1}^1 u(t)v(t)\omega(t)dt.
\end{equation*}
This family satisfies
\begin{equation*}
    \deg\left(\pi_n\right)=n, \quad n\in\mathbb{N}_0
\end{equation*}
and 
\begin{equation*}
\left\langle \pi_n,\pi_m \right\rangle_\omega=K_{nm}\delta_{nm},   \quad K_{nn}>0, \quad n,m\in\mathbb{N}_0.
\end{equation*}
We denote the relative norm as
\begin{equation*}
    \left\lVert \pi_n\right\rVert_{\omega}^2=\left\langle \pi_n,\pi_n\right\rangle_{\omega}.
\end{equation*}
We then define the \emph{edge functionals}
\begin{align}\label{newI}
\mathcal{I}_j^\omega(f) &= \int_{-1}^1 f(\gamma_j(t))\omega(t)dt, \quad j=1,2,3, \\ \label{newL}
\mathcal{L}_j^{\omega,m}(f) &= \int_{-1}^1 \pi_m(t)f(\gamma_j(t))\omega(t)dt, 
\quad j=1,2,3, \quad m=2,\dots,k.
\end{align}
Thus, each edge contributes $k$ linearly independent conditions, 
for a total of $3k$ edge functionals.

\begin{remark}
The particular case $k=2$ with an even weight function $\omega$ 
(symmetric with respect to the origin) has already been analyzed in detail in~\cite{newnew}, 
where the construction of the enriched histopolation scheme was first introduced and studied. 
The present formulation extends that approach both to arbitrary weights and to 
higher polynomial degrees $k>2$.
\end{remark}
In order to guarantee the exact reproduction of all polynomials in 
$\mathbb{P}_k(T)$, the edge functionals introduced above are not sufficient. 
Additional functionals acting in the interior of the triangle are therefore required. 
For this reason, we complete the system by introducing degrees of freedom in the interior of $T$. 
In particular, we consider the \emph{bubble space}
\begin{equation*}
    \mathbb{B}_k = \left
\{ \lambda_1\lambda_2\lambda_3 q \, :\, q \in \mathbb{P}_{k-3}(T) \right\}, \quad \dim\left(\mathbb{B}_k\right)=r_k= \frac{(k-1)(k-2)}{2}.
\end{equation*}
Let $\left\{ g_\ell \right\}_{\ell=1}^{r_k}$ be a basis of $\mathbb{P}_{k-3}(T)$, 
and choose a positive density $W$ on $T$ (for instance, $W = 1/|T|$). For each $\ell=1,\dots,r_k$ we define the interior functional
\begin{equation*}
\mathcal{J}_\ell(f) := \iint_T g_\ell(\B x)f(\B x)W(\B x)d\B x.    
\end{equation*}
We consider the set of all linear functionals 
\begin{equation*}
    \Sigma^\omega_k(T) = \left
\{ \mathcal{I}_j^\omega, \mathcal{L}_j^{\omega,m}\, : \, j=1,2,3,\, m=2,\dots,k \right\}\cup \left\{ \mathcal{J}_\ell \, : \, \ell=1,\dots,r_k \right\}.
\end{equation*}
The associated enriched triple is
\begin{equation*}
\mathcal{H}^\omega_k = \left(T,\mathbb{P}_k(T), \Sigma^\omega_k(T) \right).    
\end{equation*}

\begin{theorem}
For every $k \geq 2$, the triple $\mathcal{H}^\omega_k$ is unisolvent. 
\end{theorem}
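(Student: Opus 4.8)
The plan is to turn unisolvency into a statement about the kernel of the evaluation map. Counting the functionals gives $\#\Sigma^\omega_k(T)=3k+r_k=3k+\tfrac{(k-1)(k-2)}{2}=\tfrac{(k+1)(k+2)}{2}=\dim\mathbb{P}_k(T)$, so it suffices to show that any $p\in\mathbb{P}_k(T)$ annihilated by every functional in $\Sigma^\omega_k(T)$ vanishes identically. I would carry this out in three stages: first describe the edge traces of $p$, then force them to vanish, and finally exploit the interior functionals.

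\emph{Edge traces.} For each $j$ the restriction $p_j(t):=p(\gamma_j(t))$ is a univariate polynomial of degree at most $k$, hence it expands as $p_j=\sum_{m=0}^{k}c_m^{(j)}\pi_m$ in the $\omega$-orthogonal basis. The functional $\mathcal{I}_j^\omega$ recovers a nonzero multiple of $\langle p_j,\pi_0\rangle_\omega$, while $\mathcal{L}_j^{\omega,m}$ recovers $\langle p_j,\pi_m\rangle_\omega=c_m^{(j)}\|\pi_m\|_\omega^2$ for $m=2,\dots,k$. Their vanishing forces $c_m^{(j)}=0$ for every $m\in\{0,2,3,\dots,k\}$, leaving $p_j=c_1^{(j)}\pi_1$. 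Thus the trace of $p$ on each edge is a scalar multiple of the single degree-one polynomial $\pi_1(t)=t-m_{1,\omega}$; note that the deliberate omission of the index $m=1$ is precisely what leaves this one residual degree of freedom per edge.

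\emph{Boundary vanishing (the crux).} Evaluating the three affine traces at the endpoints of each edge and imposing that $p$ be single-valued at the shared vertices produces a homogeneous cyclic $3\times 3$ system in the unknowns $c_1^{(1)},c_1^{(2)},c_1^{(3)}$, whose entries are $\pi_1(1)=1-m_{1,\omega}$ and $\pi_1(-1)=-1-m_{1,\omega}$. I expect the essential point to be the nonsingularity of this system: its determinant is, up to sign, $\pi_1(-1)^3-\pi_1(1)^3$, which is nonzero because $\pi_1(-1)<0<\pi_1(1)$. This inequality is exactly where $\omega$ being a genuine probability density enters, since $m_{1,\omega}=\int_{-1}^1 t\,\omega(t)\,dt\in(-1,1)$ strictly (the mass cannot concentrate at $\pm1$ because $\omega\in L^1$). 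Hence $c_1^{(1)}=c_1^{(2)}=c_1^{(3)}=0$, so $p$ vanishes on all three edges; restricted to the line $\{\lambda_i=0\}$ it is a univariate polynomial with infinitely many roots, whence each $\lambda_i$ divides $p$ and $p=\lambda_1\lambda_2\lambda_3\,q$ with $q\in\mathbb{P}_{k-3}(T)$, that is $p\in\mathbb{B}_k$.

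\emph{Interior.} Writing $q=\sum_{\ell=1}^{r_k}q_\ell g_\ell$ in the chosen basis of $\mathbb{P}_{k-3}(T)$ and forming the combination $\sum_\ell q_\ell\,\mathcal{J}_\ell(p)=0$ yields $\iint_T \lambda_1\lambda_2\lambda_3\,q^2\,W\,d\B x=0$. Since $\lambda_1\lambda_2\lambda_3>0$ and $W>0$ throughout the interior of $T$, the integrand is nonnegative, forcing $q\equiv 0$ and therefore $p\equiv 0$. For $k=2$ this last stage is vacuous, as $r_2=0$ and $\mathbb{B}_2=\{0\}$, so a polynomial of degree at most $2$ vanishing on $\partial T$ is already zero; the same argument thus covers every $k\ge 2$ uniformly.
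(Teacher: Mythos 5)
Your proof is correct, and its crux is genuinely different from the paper's. The first and last stages coincide with the paper: orthogonality of the $\pi_m$ kills the edge coefficients of order $m\ge 2$, and the interior functionals kill the bubble component via positive definiteness of the weighted bilinear form on $\mathbb{P}_{k-3}(T)$. The middle step diverges. The paper does \emph{not} use $\mathcal{I}_j^\omega$ to remove the constant term of the trace; it keeps the full affine trace, invokes the classical fact that a polynomial of degree $\le k$ with affine edge traces lies in $\mathbb{P}_1(T)\oplus\mathbb{B}_k$, writes $p=p_1+q$, and then applies the one-point Gauss quadrature rule for $\omega$ to get $0=\mathcal{I}_j^\omega(p)=\mathcal{I}_j^\omega(p_1)=w_j\,p_1(\gamma_j(t_j))$ with $w_j>0$ and $t_j\in(-1,1)$, so $p_1$ vanishes at three noncollinear points and is zero. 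You instead use $\mathcal{I}_j^\omega$ immediately to reduce each trace to a multiple of $\pi_1$, then exploit single-valuedness of $p$ at the vertices to obtain a homogeneous cyclic $3\times 3$ system whose nonsingularity follows from $\pi_1(-1)<0<\pi_1(1)$, i.e., from the strict bound $|m_{1,\omega}|<1$, which you correctly justify from $\omega$ being an $L^1$ density. Your divisibility argument ($p$ vanishing on the three edge lines forces $\lambda_1\lambda_2\lambda_3\mid p$, the linear forms being pairwise coprime) in effect reproves, in the only case you need, the classical decomposition that the paper cites without proof. The trade-off: the paper's route is shorter once one grants the quoted decomposition and the existence of the Gauss rule for an arbitrary density; yours is more elementary and self-contained, replacing quadrature theory with linear algebra, and it pinpoints where the probability-density hypothesis enters (through $|m_{1,\omega}|<1$, the analogue of the paper's positive weight and interior node). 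Both arguments cover every $k\ge 2$ uniformly.
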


\begin{proof}
Let $p \in \mathbb{P}_k(T)$ be a polynomial such that
\begin{eqnarray}\label{1}
    \mathcal{I}^{\omega}_j(p)&=&0, \quad j=1,2,3,\\ \label{2}\mathcal{L}^{\omega,m}_j(p)&=&0, \quad j=1,2,3, \quad m=2,\dots,k,\\ \label{3}  \mathcal{J}_{\ell}(p)&=&0, \quad  \ell=1,\dots,r_k.
\end{eqnarray}
Since $p \in \mathbb{P}_k(T)$, its restriction to any edge $e_j$ is a univariate polynomial of degree at most $k$, i.e.
\begin{equation*}
p\left(\gamma_j(t)\right)\in\mathbb{P}_k([-1,1]), \quad j=1,2,3. 
\end{equation*}
Hence we may expand
\begin{equation}\label{asu1}
p\left(\gamma_j(t)\right)=\sum_{l=0}^ka_{l,j} \pi_l(t). 
\end{equation}
Multiplying both sides by $\pi_m(t)\omega(t)$ and integrating over $[-1,1]$, orthogonality gives
\begin{equation*}
\mathcal{L}_j^{\omega,k}(p)=\int_{-1}^1 \pi_k(t) p\left(\gamma_j(t)\right)\omega(t)dt=\sum_{l=0}^ka_{l,j} \int_{-1}^1 \pi_k(t)\pi_l(t)\omega(t)dt=a_{k,j} \left\lVert\pi_k \right\rVert^2_{\omega}.
\end{equation*}
By condition~\eqref{2}, we have $\mathcal{L}_j^{\omega,k}(p)=0$, which implies $a_{k,j}=0$. Repeating the same argument for $m=k,k-1,\dots,2$, we have
\begin{equation*} p\left(\gamma_j(t)\right)=a_{0,j}\pi_0(t)+a_{1,j}\pi_1(t).
\end{equation*}
Then the restriction of $p$ to each edge $e_j$ is affine. 
It is a classical fact that any polynomial of degree at most $k$ whose restriction 
to every edge of $T$ is affine necessarily belongs to
\begin{equation*}
\mathbb{P}_1(T) \oplus \mathbb{B}_k.  
\end{equation*}
 Hence we can write
\begin{equation*}
p = p_1 + q, \quad p_1 \in \mathbb{P}_1(T),\quad q \in \mathbb{B}_k.    
\end{equation*}
Applying condition~\eqref{1} to $p$, and using the one-point Gaussian quadrature rule with respect to the weight $\omega$, we obtain
\begin{equation*}
0=\mathcal{I}_j^{\omega}(p)=\mathcal{I}_j^{\omega}\left(p_1\right)= w_j p_1\left(\gamma_j(t_j)\right), \quad t_j\in (-1,1), \quad w_j>0, \quad j=1,2,3.
\end{equation*}
Therefore $p_1$ is a linear polynomial vanishing at three noncollinear points, which forces $p_1=0$.
Thus $p$ must belong to the bubble space, and can be written as
\begin{equation*}
    p(\B x)=\sum_{\kappa=1}^{r_k} b_{\kappa} \lambda_1\lambda_2\lambda_3 g_{\kappa},
\end{equation*}
where $b_{\kappa}\in\mathbb{R}$, $g_{\kappa}\in\mathbb{P}_{k-3}(T)$, $\kappa=1,\dots,r_k$. Equivalently, we can write
\begin{equation*}
    p= \lambda_1\lambda_2\lambda_3 q, \quad q=\sum_{\kappa=1}^{r_k}b_{\kappa} g_{\kappa}\in\mathbb{P}_{k-3}(T). 
\end{equation*}
Then, conditions~\eqref{3} become
\begin{equation*}
J_\ell(p)=J_\ell\left(\lambda_1\lambda_2\lambda_3 q\right) = \iint_T g_\ell(\B x)\lambda_1(\B x)\lambda_2(\B x)\lambda_3(\B x)
q(\B x)W(\B x)d \B x = 0, \quad \ell=1,\dots,r_k.    
\end{equation*}
Since the weight $\lambda_1\lambda_2\lambda_3W$ is strictly positive in the 
interior of $T$, the bilinear form
\begin{equation*}
(q,g) \mapsto \iint_T q(\B x)g(\B x)\lambda_1(\B x)\lambda_2(\B x)\lambda_3(\B x)
W(\B x)d\B x    
\end{equation*}
is positive definite on $\mathbb{P}_{k-3}(T)$.  
Hence 
\begin{equation*}
    q=\sum_{\kappa=1}^{r_k}b_{\kappa} g_{\kappa}=0
\end{equation*}
then
\begin{equation*}
    b_{1}=\cdots=b_{r_k}=0,
\end{equation*}
which implies $p=0$. This concludes the proof.
\end{proof}

\begin{remark}
The counting of degrees of freedom is consistent. Indeed, 
\begin{equation*}
\#\Sigma^\omega_k(T) = 3k + \frac{(k-1)(k-2)}{2}
= \frac{(k+1)(k+2)}{2} = \dim\left(\mathbb{P}_k(T)\right)=:K,   
\end{equation*}
where $\#(\cdot)$ denotes the cardinality operator. 
\end{remark}
In this setting, we define the following linear map
\begin{equation}\label{mapHgeneral}
H:q \in \mathbb{P}_k(T)\longrightarrow
H(q) := \left(\mathcal{D}_1^{\mathrm{enr}}(q), \dots, \mathcal{D}_K^{\mathrm{enr}}(q)\right)^{\top} \in \mathbb{R}^K, 
\end{equation}
where 
\begin{equation*}
\left\{\mathcal{D}_r^{\mathrm{enr}}\, :\, r=1,\dots,K\right\}    
\end{equation*}
denote the linearly independent degrees of freedom 
associated with the enriched triple
$\mathcal{H}_k^{\omega}$. 

We represent the linear map $H$ in matrix form with respect to the basis
\begin{equation*}
    \B{B}_{K} = \{\phi_{1}, \dots, \phi_{K}\}
\end{equation*}
of $\mathbb{P}_k(T)$ and the canonical basis of $\mathbb{R}^{K}$,
\begin{equation*}
    \{\boldsymbol{u}_1, \dots, \boldsymbol{u}_{K}\}.
\end{equation*}
Specifically, we define
\begin{equation}\label{matrixHgeneral}
      \hat{H} = \left[\boldsymbol{h}_{1},\dots,\boldsymbol{h}_{K}\right]\in\mathbb{R}^{K\times K},
\end{equation}
where 
\begin{equation*}
    \boldsymbol{h}_{j}= H(\phi_{j}), \quad j=1,\dots,K.
\end{equation*}
We now provide an explicit expression for the basis functions
\begin{equation*}
\mathbf{B}^{\mathrm{enr}}=\left\{\Psi_{\ell} \, :\, \ell=1,\dots,K\right\}
\end{equation*}
for $\mathbb{P}_k(T)$ such that
\begin{equation}
\label{condbasis}  
H\left(\Psi_{\ell}\right)=\boldsymbol{u}_{\ell}, \quad \ell=1,\dots,K.
\end{equation}
These functions are usually referred to as the \textit{basis functions of the triple}  $\mathcal{H}_k^{\omega}$.

\begin{theorem}\label{theo:generalPk}
    The basis functions of the enriched triple $\mathcal{H}_k^{\omega}$ can be written as
    \begin{equation}\label{genbasisgeneral}
        \Psi_{\ell}(\B{x}) = \left\langle \tilde{\boldsymbol{h}}_{\ell},  \boldsymbol{\phi}(\B{x}) \right\rangle,
        \quad \ell=1,\dots,K,
    \end{equation}
    where 
    \begin{equation}\label{psigeneral}
        \boldsymbol{\phi}(\B{x}) = 
        \left(\phi_{1}(\B{x}), \dots, \phi_{K}(\B{x})\right)^{\top},
    \end{equation}
and $\left\{\tilde{\boldsymbol{h}}_{\ell}\right\}_{\ell=1}^K$ denote the column vectors of the inverse matrix 
    $\hat{H}^{-1}$.
\end{theorem}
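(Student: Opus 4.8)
The plan is to reduce \eqref{genbasisgeneral} to an elementary change-of-coordinates identity, relying on the invertibility of $\hat{H}$ that the preceding unisolvency theorem supplies. Since each basis function $\Psi_\ell$ is required to lie in $\mathbb{P}_k(T)$, I would first expand it in the basis $\B{B}_{K}$, writing
\[
\Psi_\ell = \sum_{j=1}^K c_{j\ell}\,\phi_j, \qquad \boldsymbol{c}_\ell = (c_{1\ell},\dots,c_{K\ell})^\top \in \mathbb{R}^K,
\]
so that determining $\Psi_\ell$ amounts to determining the coordinate vector $\boldsymbol{c}_\ell$.

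Next I would apply the linear map $H$ of \eqref{mapHgeneral}. By linearity and the definition \eqref{matrixHgeneral}, in which $\boldsymbol{h}_j = H(\phi_j)$ is the $j$-th column of $\hat{H}$, one gets $H(\Psi_\ell) = \sum_{j=1}^K c_{j\ell}\,\boldsymbol{h}_j = \hat{H}\,\boldsymbol{c}_\ell$. Hence the defining requirement \eqref{condbasis}, namely $H(\Psi_\ell)=\boldsymbol{u}_\ell$, is equivalent to the square linear system $\hat{H}\,\boldsymbol{c}_\ell = \boldsymbol{u}_\ell$. The crucial point is that $\hat{H}$ is invertible: a nontrivial kernel vector $\boldsymbol{c}$ of $\hat{H}$ would correspond to a nonzero polynomial $\sum_j c_j \phi_j \in \mathbb{P}_k(T)$ annihilated by all $K$ degrees of freedom $\mathcal{D}_r^{\mathrm{enr}}$, contradicting the unisolvency of $\mathcal{H}_k^\omega$ established above. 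Therefore the system has the unique solution $\boldsymbol{c}_\ell = \hat{H}^{-1}\boldsymbol{u}_\ell$, and since $\boldsymbol{u}_\ell$ is the $\ell$-th canonical basis vector, this product is exactly the $\ell$-th column $\tilde{\boldsymbol{h}}_\ell$ of $\hat{H}^{-1}$.

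Substituting $\boldsymbol{c}_\ell = \tilde{\boldsymbol{h}}_\ell$ back into the expansion then yields
\[
\Psi_\ell(\B{x}) = \sum_{j=1}^K (\tilde{\boldsymbol{h}}_\ell)_j\,\phi_j(\B{x}) = \left\langle \tilde{\boldsymbol{h}}_\ell,\, \boldsymbol{\phi}(\B{x}) \right\rangle,
\]
which is precisely \eqref{genbasisgeneral}, with uniqueness of each $\Psi_\ell$ inherited from the invertibility of $\hat{H}$. I do not expect any genuine obstacle here: the argument is a formal manipulation whose only substantive input is the invertibility of $\hat{H}$. The step requiring the most care is simply the bookkeeping that translates the nodal conditions \eqref{condbasis} into the system $\hat{H}\boldsymbol{c}_\ell = \boldsymbol{u}_\ell$ and then correctly identifies $\hat{H}^{-1}\boldsymbol{u}_\ell$ with the $\ell$-th column of $\hat{H}^{-1}$ under the paper's column convention for $\hat{H}$ and its inverse.
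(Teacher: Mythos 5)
Your proof is correct and follows essentially the same route as the paper's: expand $\Psi_\ell$ in the basis $\B{B}_{K}$, apply $H$ to reduce \eqref{condbasis} to the linear system $\hat{H}\boldsymbol{c}_\ell=\boldsymbol{u}_\ell$, and invert. The only difference is that you explicitly justify the invertibility of $\hat{H}$ via the unisolvency theorem, a point the paper leaves implicit; this is a welcome addition rather than a deviation.
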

\begin{proof}
Since $\Psi_{\ell}\in\mathbb{P}_k(T)$, it can be expressed as
\begin{equation}\label{phi1old}
\Psi_{\ell}=\left\langle \boldsymbol{c}^{(\ell)},\boldsymbol{\phi}\right\rangle=\sum_{i=1}^K c^{(\ell)}_i\phi_i, 
\end{equation}
where
\begin{equation*}
\boldsymbol{c}^{(\ell)}=\left(c^{(\ell)}_1,\dots,c^{(\ell)}_K\right)^\top\in\mathbb{R}^{K} 
\end{equation*}
and $\boldsymbol{\phi}$ is defined in~\eqref{psigeneral}.  
Applying $H$ to~\eqref{phi1old} and using linearity gives
\begin{equation*}
H\left(\Psi_{\ell}\right)=\sum_{i=1}^K c_{i}^{(\ell)} H\left(\phi_i\right)= \hat{H}\boldsymbol{c}^{(\ell)}.
\end{equation*}
Imposing condition~\eqref{condbasis} yields
\begin{equation*}
\boldsymbol{u}_{\ell}=\hat{H}\boldsymbol{c}^{(\ell)},
\end{equation*}
and therefore
\begin{equation}\label{ssc}
\boldsymbol{c}^{(\ell)}=\tilde{\boldsymbol{h}}_{\ell}.
\end{equation}
Substituting~\eqref{ssc} into~\eqref{phi1old} gives the representation~\eqref{genbasisgeneral}, 
which completes the proof.
\end{proof}

We now have all the necessary ingredients to define the reconstruction operator associated to the triple $\mathcal{H}^{\omega}_k$.  
This operator is constructed using the basis functions of Theorem~\ref{theo:generalPk} and is defined by
\begin{equation}\label{pi1}
\begin{array}{rcl}
\Pi_{k}^{\omega}\colon C(T) &\longrightarrow& \mathbb{P}_{k}(T) \\[1ex]
f &\longmapsto& \displaystyle\sum_{j=1}^{K} 
   \mathcal{D}^{\mathrm{enr}}_{j}(f)\Psi_{j}.
\end{array}
\end{equation}

In the following, we derive an explicit representation of the basis functions 
in the quadratic case $k=2$. 
To this end, we first establish a few auxiliary lemmas that will be used 
in the subsequent construction.
\begin{lemma}\label{lemma1}
   For any $i,j=1,2,3,$ it results
   \begin{equation*}
       \mathcal{I}_{j}^{\omega}\left(\lambda_i\right) =
\begin{cases}
0, & \text{if } i=j,\\[6pt]
\dfrac{1+m_{1,\omega}}{2}, & \text{if } i=j+1, \\[10pt]
\dfrac{1-m_{1,\omega}}{2}, & \text{if } i=j+2,
\end{cases}
   \end{equation*}
where $m_{1,\omega}$ is the first moment of the weight function $\omega$. 
\end{lemma}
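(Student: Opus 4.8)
The plan is to compute each edge functional $\mathcal{I}_j^\omega(\lambda_i)$ directly by exploiting the explicit parametrization of the edges together with the affine property of the barycentric coordinates. The key observation is that along the edge $e_j$, the restriction $\lambda_i(\gamma_j(t))$ is an \emph{affine} function of $t$, so the integral against $\omega$ reduces to a combination of the zeroth and first moments $m_{0,\omega}=1$ and $m_{1,\omega}$.

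First I would evaluate the barycentric coordinates at the two endpoints of the edge. Recall that $\gamma_j(t)=\tfrac{1+t}{2}\B v_{j+1}+\tfrac{1-t}{2}\B v_{j+2}$, so $\gamma_j(1)=\B v_{j+1}$ and $\gamma_j(-1)=\B v_{j+2}$. By the affine property~\eqref{prop2}, for each $i$ we have
\begin{equation*}
\lambda_i(\gamma_j(t))=\frac{1+t}{2}\,\lambda_i(\B v_{j+1})+\frac{1-t}{2}\,\lambda_i(\B v_{j+2})
=\frac{1+t}{2}\,\delta_{i,j+1}+\frac{1-t}{2}\,\delta_{i,j+2},
\end{equation*}
using the Kronecker delta property at the vertices (with the cyclic index convention $\B v_4=\B v_1$, $\B v_5=\B v_2$).

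Next I would substitute this expression into the definition~\eqref{newI} of $\mathcal{I}_j^\omega$ and integrate term by term. In the case $i=j$, the restriction $\lambda_j(\gamma_j(t))$ vanishes identically by~\eqref{propstar}, giving $\mathcal{I}_j^\omega(\lambda_j)=0$. In the case $i=j+1$, only the first term survives, so
\begin{equation*}
\mathcal{I}_j^\omega(\lambda_{j+1})=\int_{-1}^1\frac{1+t}{2}\,\omega(t)\,dt
=\frac{1}{2}\bigl(m_{0,\omega}+m_{1,\omega}\bigr)=\frac{1+m_{1,\omega}}{2},
\end{equation*}
since $m_{0,\omega}=\int_{-1}^1\omega(t)\,dt=1$ by the normalization of the density. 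Symmetrically, for $i=j+2$ only the second term survives and yields $\tfrac12(m_{0,\omega}-m_{1,\omega})=\tfrac{1-m_{1,\omega}}{2}$.

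The computation is entirely routine, so there is no genuine obstacle; the only point requiring care is the bookkeeping of the cyclic index convention $\B v_{j+1},\B v_{j+2}$ modulo the vertices of $T$, which must be matched consistently against the convention in~\eqref{propstar} so that $i=j$ is correctly identified with the vanishing case. Once the endpoint values of $\lambda_i$ are pinned down via the Kronecker delta property, the three cases follow immediately from linearity of the integral and the two lowest moments of $\omega$.
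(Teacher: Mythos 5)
Your proposal is correct and follows essentially the same route as the paper: restrict $\lambda_i$ to the edge via the affine property and the Kronecker delta property at the vertices, then integrate the resulting affine function of $t$ against $\omega$ to pick up $m_{0,\omega}=1$ and $m_{1,\omega}$. The only cosmetic difference is that you carry general indices $i,j$ (with the compact formula $\lambda_i(\gamma_j(t))=\tfrac{1+t}{2}\delta_{i,j+1}+\tfrac{1-t}{2}\delta_{i,j+2}$) throughout, whereas the paper computes the case $j=1$ explicitly and invokes cyclic permutation for $j=2,3$.
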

\begin{proof}
We prove the statement for $j=1$; the cases $j=2,3$ follow by cyclic permutation of the indices. 
Since, by~\eqref{propstar}, along $e_1$ we have $\lambda_1=0$, it is immediate that
\begin{equation*}
\mathcal{I}_{1}^{\omega}\left(\lambda_1\right)=0. 
\end{equation*}
On the other hand, by the affine property and the Kronecker
delta property of the barycentric coordinates, we have
\begin{eqnarray*}
\lambda_2\left(\gamma_1(t)\right)
&=&\frac{1+t}{2}\lambda_2(\B{v}_2)+\frac{1-t}{2}\lambda_2(\B{v}_3)
=\frac{1+t}{2},
\\
\lambda_3\left(\gamma_1(t)\right)
&=&\frac{1+t}{2}\lambda_3(\B{v}_2)+\frac{1-t}{2}\lambda_3(\B{v}_3)
=\frac{1-t}{2}.    
\end{eqnarray*}
Hence, using that $\omega$ is a probability density, we get
\begin{eqnarray*}
    \mathcal{I}_1^{\omega}\left(\lambda_2\right)&=&\int_{-1}^1 \frac{1+t}{2}\omega(t)dt
=\frac{1}{2}\int_{-1}^1 \omega(t)dt+\frac{1}{2}\int_{-1}^1 t\omega(t)dt
=\frac{1+m_{1,\omega}}{2},\\
\mathcal{I}_1^{\omega}\left(\lambda_3\right)&=&\int_{-1}^1 \frac{1-t}{2}\omega(t)dt
=\frac{1}{2}\int_{-1}^1 \omega(t)dt-\frac{1}{2}\int_{-1}^1 t\omega(t)dt
=\frac{1-m_{1,\omega}}{2}.
\end{eqnarray*}
The cases $j=2$ and $j=3$ are obtained analogously by cyclic permutation of the indices.
\end{proof}

\begin{lemma}\label{lemma2}
For any $i,k,j\in\{1,2,3\}$ it holds
\begin{equation*}
\mathcal{I}_{j}^{\omega}(\lambda_i\lambda_k)=
\begin{cases}
0, & \text{if } i=j \text{ or } k=j,\\[6pt]
\dfrac{1-m_{2,\omega}}{4}, & \text{if } i=j+1 \text{ and } k=j+2,\\[10pt]
\dfrac{1+2m_{1,\omega}+m_{2,\omega}}{4}, & \text{if } i=k=j+1,\\[10pt]
\dfrac{1-2m_{1,\omega}+m_{2,\omega}}{4}, & \text{if } i=k=j+2.
\end{cases}
\end{equation*}
\end{lemma}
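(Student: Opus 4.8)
The plan is to mimic the proof of Lemma~\ref{lemma1}, but now integrating products of two barycentric coordinates along the edges. As before, I would prove the statement for $j=1$ and recover the cases $j=2,3$ by cyclic permutation of the indices.

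First I would dispose of the vanishing cases. By~\eqref{propstar}, along the edge $e_1$ we have $\lambda_1\left(\gamma_1(t)\right)=0$ for all $t\in[-1,1]$. Hence any product $\lambda_i\lambda_k$ in which one of the two factors is $\lambda_1$ (that is, $i=1$ or $k=1$, i.e. $i=j$ or $k=j$) vanishes identically on $e_1$, so its weighted edge integral is zero. This immediately settles the first line.

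Next I would treat the three remaining (nonzero) cases by reusing the parametrizations already computed in the proof of Lemma~\ref{lemma1}, namely
\begin{equation*}
\lambda_2\left(\gamma_1(t)\right)=\frac{1+t}{2}, \qquad \lambda_3\left(\gamma_1(t)\right)=\frac{1-t}{2}.
\end{equation*}
For the mixed case $i=2$, $k=3$ I would compute
\begin{equation*}
\mathcal{I}_1^{\omega}\left(\lambda_2\lambda_3\right)=\int_{-1}^1 \frac{1+t}{2}\,\frac{1-t}{2}\,\omega(t)\,dt=\frac{1}{4}\int_{-1}^1\left(1-t^2\right)\omega(t)\,dt=\frac{1-m_{2,\omega}}{4},
\end{equation*}
using that $\omega$ is a probability density and the definition~\eqref{moment} of the moments. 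The two pure squares are handled the same way: for $i=k=2$ one expands $\left(\frac{1+t}{2}\right)^2=\frac{1}{4}\left(1+2t+t^2\right)$ to get $\frac{1+2m_{1,\omega}+m_{2,\omega}}{4}$, and for $i=k=3$ one expands $\left(\frac{1-t}{2}\right)^2=\frac{1}{4}\left(1-2t+t^2\right)$ to get $\frac{1-2m_{1,\omega}+m_{2,\omega}}{4}$.

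This lemma is essentially a routine computation and I do not anticipate any genuine obstacle; the only point demanding mild care is bookkeeping with the cyclic index convention $\B{v}_4=\B{v}_1$, $\B{v}_5=\B{v}_2$ and $\lambda_4=\lambda_1$, $\lambda_5=\lambda_2$, so that the statements ``$i=j+1$'' and ``$i=j+2$'' are interpreted modulo $3$ and match the correct vertices on each edge. Once $j=1$ is verified, the cyclic symmetry of the construction guarantees the cases $j=2,3$ without further computation, which concludes the proof.
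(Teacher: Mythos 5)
Your proposal is correct and follows essentially the same route as the paper's proof: fix $j=1$, kill the cases involving $\lambda_1$ via~\eqref{propstar}, use the edge parametrizations $\lambda_2(\gamma_1(t))=\tfrac{1+t}{2}$, $\lambda_3(\gamma_1(t))=\tfrac{1-t}{2}$ to reduce each remaining case to moments of $\omega$, and conclude by cyclic permutation. No gaps; the computations match the paper's line for line.
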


\begin{proof}
As in the previous lemma, we prove the statement for $j=1$; the cases $j=2,3$ follow by cyclic permutation of the indices.
Along $e_1$ we have $\lambda_1=0$ and
\begin{equation*}
\lambda_2\left(\gamma_1(t)\right)=\frac{1+t}{2}, \quad \lambda_3\left(\gamma_1(t)\right)=\frac{1-t}{2}.
\end{equation*}
Hence, if $i=1$ or $k=1$, then $\lambda_i\lambda_k=0$ on $e_1$ and
\begin{equation*}
\mathcal{I}_1^{\omega}\left(\lambda_i\lambda_k\right)=0.    
\end{equation*}
If $i=2$ and $k=3$, then
\begin{equation*}
\lambda_2\left(\gamma_1(t)\right)\lambda_3\left(\gamma_1(t)\right)=\frac{1-t^2}{4},    
\end{equation*}
and, using that $\omega$ is a probability density, we have
\begin{equation*}
    \mathcal{I}_1^{\omega}\left(\lambda_2\lambda_3\right)=\int_{-1}^1 \frac{1-t^2}{4}\omega(t)dt
=\frac{1}{4}\left(\int_{-1}^1 \omega(t)dt-\int_{-1}^1 t^2\omega(t) dt\right)
=\frac{1-m_{2,\omega}}{4}.
\end{equation*}
If $i=k=2$, then
\begin{equation*}
\lambda_2^2\left(\gamma_1(t)\right)=\frac{(1+t)^2}{4}
=\frac{1+2t+t^2}{4},
\end{equation*}
so
\begin{equation*}
\mathcal{I}_1^{\omega}\left(\lambda_2^2\right)=\int_{-1}^1 \frac{1+2t+t^2}{4}\omega(t)dt
=\frac{1+2m_{1,\omega}+m_{2,\omega}}{4}.    
\end{equation*}
Similarly, if $i=k=3$,
\begin{equation*}
\lambda_3^2\left(\gamma_1(t)\right)=\frac{(1-t)^2}{4}
=\frac{1-2t+t^2}{4}
\end{equation*}
and
\begin{equation*}
    \mathcal{I}_1^{\omega}\left(\lambda_3^2\right)=\int_{-1}^1 \frac{1-2t+t^2}{4}\omega(t)dt
=\frac{1-2m_{1,\omega}+m_{2,\omega}}{4}.
\end{equation*}
This completes the proof; the cases $j=2,3$ follow analogously by cyclic permutation of the indices.
\end{proof}

\begin{lemma}\label{lemma3}
   For any $i,j=1,2,3,$ it results
   \begin{equation*}
\mathcal{L}_{j}^{\omega,2}\left(\lambda_i\right) =0.
   \end{equation*}
\end{lemma}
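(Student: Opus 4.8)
The plan is to derive the result as an immediate consequence of orthogonality. Writing out the definition~\eqref{newL}, we have $\mathcal{L}_j^{\omega,2}(\lambda_i)=\int_{-1}^1 \pi_2(t)\,\lambda_i(\gamma_j(t))\,\omega(t)\,dt=\langle \pi_2,\,\lambda_i\circ\gamma_j\rangle_\omega$, so the task reduces to showing that this pairs $\pi_2$ against a function that orthogonality annihilates.

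The key step is to observe that the map $t\mapsto\lambda_i(\gamma_j(t))$ is affine in $t$. Indeed, $\gamma_j(t)=\tfrac{1+t}{2}\B v_{j+1}+\tfrac{1-t}{2}\B v_{j+2}$ is a convex combination of the two endpoints with coefficient $\tfrac{1+t}{2}$ depending affinely on $t$; applying the affine property~\eqref{prop2} of the barycentric coordinates then yields $\lambda_i(\gamma_j(t))=\tfrac{1+t}{2}\lambda_i(\B v_{j+1})+\tfrac{1-t}{2}\lambda_i(\B v_{j+2})$, which is a polynomial of degree at most one in $t$. Hence $\lambda_i\circ\gamma_j\in\mathbb{P}_1([-1,1])$ for every pair $(i,j)$, including the degenerate case $i=j$, where it vanishes identically by~\eqref{propstar}. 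The explicit values already recorded in the proof of Lemma~\ref{lemma1} (for instance $\lambda_2\circ\gamma_1=(1+t)/2$ and $\lambda_3\circ\gamma_1=(1-t)/2$ on $e_1$) confirm this affine structure directly.

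Finally, since $\{\pi_n\}$ is orthogonal with respect to $\langle\cdot,\cdot\rangle_\omega$ and $\deg\pi_n=n$, the set $\{\pi_0,\pi_1\}$ spans $\mathbb{P}_1([-1,1])$, so $\pi_2$ is $\omega$-orthogonal to every affine polynomial. Combining this with the previous step gives $\langle\pi_2,\lambda_i\circ\gamma_j\rangle_\omega=0$, which is precisely the assertion. There is essentially no obstacle here: the entire content lies in the affine-restriction observation, after which orthogonality closes the argument in a single line; the only care needed is to confirm the degree bound uniformly across all pairs $(i,j)$, which the affine property~\eqref{prop2} supplies.
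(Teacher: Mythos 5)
Your proof is correct and follows exactly the same route as the paper, whose entire proof reads ``It follows directly from the orthogonality of $\pi_2$''; you have simply made explicit the two implicit ingredients (that $\lambda_i\circ\gamma_j$ is affine in $t$ by the affine property~\eqref{prop2}, and that $\pi_2$ is $\omega$-orthogonal to $\mathrm{span}\{\pi_0,\pi_1\}=\mathbb{P}_1([-1,1])$).
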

\begin{proof}
It follows directly from the orthogonality of $\pi_2$.
\end{proof}

\begin{lemma}\label{lemma4}
For any $i,k,j=1,2,3$ it holds
\begin{equation*}
\mathcal{L}_{j}^{\omega,2}\left(\lambda_i\lambda_k\right)
= -\frac{\rho_2}{4}\left(1-\delta_{ij}\right)\left(1-\delta_{kj}\right),
\end{equation*}
where 
\begin{equation}\label{M2}
   \rho_2=\left\langle \pi_2, t^2\right\rangle_{\omega}.
\end{equation}
\end{lemma}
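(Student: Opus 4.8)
The plan is to mirror the proofs of Lemmas~\ref{lemma1} and~\ref{lemma2}: by the cyclic structure of the construction it suffices to treat $j=1$, the cases $j=2,3$ following by cyclic permutation of the indices. I would begin by recalling the restrictions to the edge $e_1$, namely $\lambda_1(\gamma_1(t))=0$, together with $\lambda_2(\gamma_1(t))=\tfrac{1+t}{2}$ and $\lambda_3(\gamma_1(t))=\tfrac{1-t}{2}$. If $i=1$ or $k=1$, that is if $i=j$ or $k=j$, then $\lambda_i\lambda_k$ vanishes identically along $e_1$, and hence $\mathcal{L}_1^{\omega,2}(\lambda_i\lambda_k)=0$; this is exactly the content of the factor $(1-\delta_{ij})(1-\delta_{kj})$, so the degenerate cases are dispatched immediately.

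The heart of the argument is the non-degenerate case, in which neither index equals $j$. For $j=1$ the relevant edge product is $\lambda_2\lambda_3$, whose restriction to $e_1$ is the quadratic $\tfrac{(1+t)(1-t)}{4}=\tfrac{1-t^2}{4}$. Testing against $\pi_2\,\omega$ and using that $\pi_2$ is orthogonal to every polynomial of degree at most one, so that $\langle\pi_2,1\rangle_\omega=\langle\pi_2,t\rangle_\omega=0$, the constant contribution drops out and only the quadratic term survives:
\[
\mathcal{L}_1^{\omega,2}(\lambda_2\lambda_3)=-\tfrac14\,\langle\pi_2,t^2\rangle_\omega=-\tfrac{\rho_2}{4},
\]
which is the asserted value. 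More conceptually, writing the restriction of any such product as $c_0+c_1t+c_2t^2$, orthogonality collapses the functional to $c_2\,\rho_2$, and here $c_2=-\tfrac14$.

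I expect the only delicate point to be this orthogonality cancellation together with the correct identification of the surviving coefficient: the entire result rests on the fact that $\pi_2$ annihilates the degree~$\le 1$ part of the edge restriction, reducing $\mathcal{L}_j^{\omega,2}$ to the single quantity $\rho_2=\langle\pi_2,t^2\rangle_\omega$ scaled by the $t^2$-coefficient of the product. In contrast with Lemma~\ref{lemma2}, where the moments $m_{1,\omega}$ and $m_{2,\omega}$ appear explicitly, here they are eliminated by orthogonality, and it is precisely the negative $t^2$-coefficient of $\lambda_2\lambda_3$ that yields the compact expression $-\tfrac{\rho_2}{4}$.
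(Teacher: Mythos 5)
Your argument reproduces the paper's proof essentially step for step: the same reduction to $j=1$ by cyclic permutation, the same edge restrictions $\lambda_1(\gamma_1(t))=0$, $\lambda_2(\gamma_1(t))=\tfrac{1+t}{2}$, $\lambda_3(\gamma_1(t))=\tfrac{1-t}{2}$, the same dispatch of the cases $i=j$ or $k=j$, and the same orthogonality computation giving $\mathcal{L}_1^{\omega,2}(\lambda_2\lambda_3)=-\tfrac14\left\langle\pi_2,t^2\right\rangle_\omega=-\tfrac{\rho_2}{4}$.

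However, both you and the paper treat only the mixed product in the non-degenerate case, and this leaves a real gap: the lemma is stated for \emph{any} $i,k,j\in\{1,2,3\}$, so ``neither index equals $j$'' also includes the squares $i=k=j+1$ and $i=k=j+2$. Your own coefficient observation settles these cases, but against the claimed formula: the restriction $\lambda_{j+1}^2(\gamma_j(t))=\tfrac{(1+t)^2}{4}=\tfrac{1+2t+t^2}{4}$ has $t^2$-coefficient $+\tfrac14$, so orthogonality yields
\begin{equation*}
\mathcal{L}_j^{\omega,2}\left(\lambda_{j+1}^2\right)=\frac{1}{4}\left\langle \pi_2,t^2\right\rangle_\omega=+\frac{\rho_2}{4},
\end{equation*}
whereas the Kronecker-delta expression demands $-\tfrac{\rho_2}{4}$; the same sign discrepancy occurs for $\lambda_{j+2}^2$. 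Since $\rho_2\neq 0$ (the paper itself relies on this to invert $\hat H$), the stated identity is genuinely false when $i=k\neq j$, so no proof can cover ``all cases'': the statement must either be restricted to $i\neq k$ or record the value $+\tfrac{\rho_2}{4}$ for the squares. This flaw is inherited from the paper, whose proof likewise checks only $(i,k)=(j+1,j+2)$ and then asserts the formula ``in all cases''; it is harmless downstream, because the matrix $\hat H$ only ever evaluates $\mathcal{L}_j^{\omega,2}$ on the mixed products $\lambda_1\lambda_2,\lambda_3\lambda_1,\lambda_2\lambda_3$ of the basis $\B{B}_6$. Still, as a proof of the lemma as written, your argument (like the paper's) is incomplete, and your closing remark that orthogonality ``collapses the functional to $c_2\rho_2$'' is precisely the tool that exposes the exception.
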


\begin{proof}
We fix $j\in \{1,2,3\}$. From~\eqref{propstar}, along $e_j$ we have $\lambda_j=0$, while
\begin{equation*}
\lambda_{j+1}\left(\gamma_j(t)\right)=\frac{1+t}{2}, \quad  \lambda_{j+2}\left(\gamma_j(t)\right)=\frac{1-t}{2}.
\end{equation*}
Then, if $i=j$ or $k=j$, we get
\begin{equation*}
    \lambda_i\lambda_k=0
\end{equation*}
on $e_j$ and hence
$$\mathcal{L}_j^{\omega,2}\left(\lambda_i\lambda_k\right)=0.$$ 
Otherwise if 
\begin{equation*}
    (i,k)=(j+1,j+2),
\end{equation*}
then, by using the affine property of the barycentric coordinates, we have
\begin{eqnarray*}
\mathcal{L}_{j}^{\omega,2}\left(\lambda_{j+1}\lambda_{j+2}\right)
&=&\int_{-1}^1 \pi_2(t)\left(\lambda_{j+1}\lambda_{j+2}\right)\left(\gamma_j(t)\right)\omega(t)dt\\
&=&\frac{1}{4}\int_{-1}^1 \pi_2(t) \frac{1-t^2}{4} \omega(t) dt\\&=&
\frac{1}{4}\left(\langle \pi_2,1\rangle_{\omega}-\langle \pi_2,t^2\rangle_{\omega}\right)
=-\frac{\rho_2}{4},
\end{eqnarray*}
where in the last equation we have used the orthogonality property of the polynomial $\pi_2$. This gives the claimed formula in all cases.
\end{proof}

\begin{remark}
    We observe that $\rho_2$ defined in~\eqref{M2} can be expressed in terms of the moments $m_{k,\omega}$. Indeed, assuming to work with a family of monic orthogonal polynomials, we can write
    \begin{equation*}
\pi_2(t)=t^2-at-b, \quad a,b\in\mathbb{R}.  
    \end{equation*}
The orthogonality conditions
\begin{equation*}
\left\langle \pi_2,1\right\rangle_\omega=0,\quad \left\langle \pi_2,t\right\rangle_\omega=0  
\end{equation*}
give the linear system
\begin{equation*}
    m_{2,\omega}-a m_{1,\omega}-b=0,\quad m_{3,\omega}-a m_{2,\omega}-bm_{1,\omega}=0.
\end{equation*}
Solving for $a$ and $b$ yields
\begin{equation*}
a=\frac{m_{3,\omega}-m_{1,\omega}m_{2,\omega}}{m_{2,\omega}-m_{1,\omega}^2},\quad
b=m_{2,\omega}-a m_{1,\omega}.    
\end{equation*}
Then
\begin{equation*}
    \rho_2=\left\langle \pi_2,t^2\right\rangle_\omega
=\int_{-1}^1 \left(t^2-at-b\right)t^2\omega(t)dt
= m_{4,\omega}-am_{3,\omega}-bm_{2,\omega}.
\end{equation*}
Substituting $b=m_{2,\omega}-a m_{1,\omega}$ gives
\begin{equation*}
    \rho_2
= m_{4,\omega}-am_{3,\omega}-m_{2,\omega}^2+am_{1,\omega}m_{2,\omega}
= m_{4,\omega}-m_{2,\omega}^2 - a\left(m_{3,\omega}-m_{1,\omega}m_{2,\omega}\right).
\end{equation*}
Using $a=\dfrac{m_{3,\omega}-m_{1,\omega}m_{2,\omega}}{m_{2,\omega}-m_{1,\omega}^2}$ we finally obtain the compact formula
\begin{equation}\label{rhomoment}
\rho_2= m_{4,\omega} - m_{2,\omega}^2 - \frac{\left(m_{3,\omega}-m_{1,\omega}m_{2,\omega}\right)^2}{m_{2,\omega}-m_{1,\omega}^2} .    
\end{equation}
\end{remark}

Now we are ready to compute the analytic expression of the basis functions relative to $\mathcal{H}_2^{\omega}$. To this aim, we consider the following map 
\begin{equation*}
H:\mathbb{P}_2(T)\to\mathbb{R}^6,\quad
H(q)=\left(\mathcal{I}_1^\omega(q), \mathcal{I}_2^\omega(q), \mathcal{I}_3^\omega(q),  \mathcal{L}_1^{\omega,2}(q),
\mathcal{L}_2^{\omega,2}(q),
\mathcal{L}_3^{\omega,2}(q)\right)^\top.    
\end{equation*}
Let $\hat{H}\in\mathbb{R}^{6\times 6}$ be the matrix of $H$ expressed relative to the basis
\begin{equation*}
\B{B}_6=\left\{\lambda_1, \lambda_2, \lambda_3, \lambda_1\lambda_2, \lambda_3\lambda_1, \lambda_2\lambda_3\right\}    
\end{equation*}
for $\mathbb{P}_2(T)$. In this case, using Lemmas~\ref{lemma1},~\ref{lemma2},~\ref{lemma3} and~\ref{lemma4}, we have
\begin{equation*}
\hat{H}=
\begin{pmatrix}
0 & \dfrac{1+m_{1,\omega}}{2} & \dfrac{1-m_{1,\omega}}{2} & 0 & 0 & \dfrac{1-m_{2,\omega}}{4}\\[6pt]
\dfrac{1-m_{1,\omega}}{2} & 0 & \dfrac{1+m_{1,\omega}}{2} & 0 & \dfrac{1-m_{2,\omega}}{4} & 0\\[6pt]
\dfrac{1+m_{1,\omega}}{2} & \dfrac{1-m_{1,\omega}}{2} & 0 & \dfrac{1-m_{2,\omega}}{4} & 0 & 0\\[6pt]
0 & 0 & 0 & 0 & 0 & -\dfrac{\rho_2}{4}\\[6pt]
0 & 0 & 0 & 0 & -\dfrac{\rho_2}{4} & 0\\[6pt]
0 & 0 & 0 & -\dfrac{\rho_2}{4} & 0 & 0
\end{pmatrix}.
\end{equation*}
Its determinant is
\begin{equation*}
\det\left(\hat{H}\right)=\frac{\rho_2^{3}}{256}\left(3m_{1,\omega}^2+1\right)\neq 0   
\end{equation*}
since $\rho_2$, defined in~\eqref{M2}, is nonzero. We now introduce the shorthand notation

\begin{align}
\sigma_0 &:= \frac{m_{1,\omega}^2-1}{3m_{1,\omega}^2+1}, &
\sigma_+ &:= \frac{(m_{1,\omega}+1)^2}{3m_{1,\omega}^2+1}, &
\sigma_- &:= \frac{(m_{1,\omega}-1)^2}{3m_{1,\omega}^2+1}, \label{sigmas} \\[0.4em]
\tau &:= \frac{1-m_{2,\omega}}{\rho_2\left(3m_{1,\omega}^2+1\right)}, &
\nu_0 &:= \tau(m_{1,\omega}^2-1), &
\nu_+ &:= \tau(m_{1,\omega}+1)^2, &
\nu_- &:= \tau(m_{1,\omega}-1)^2. \label{tausnus}
\end{align}
With this notation, the inverse of $\hat{H}$ takes the form
\begin{equation}\label{invmat}
\widehat H^{-1} =
\begin{pmatrix}
\sigma_0 & \sigma_- & \sigma_+ & \nu_0 & \nu_{-} & \nu_{+} \\
\sigma_+ & \sigma_0 & \sigma_- &  \nu_{+} &  \nu_{0} &  \nu_{-} \\
\sigma_- & \sigma_+ & \sigma_0 &  \nu_{-} &  \nu_{+} &  \nu_{0} \\
0 & 0 & 0 & 0 & 0 & -\dfrac{4}{\rho_2} \\
0 & 0 & 0 & 0 & -\dfrac{4}{\rho_2} & 0 \\
0 & 0 & 0 & -\dfrac{4}{\rho_2} & 0 & 0 \\
\end{pmatrix}.    
\end{equation}
We next derive closed-form expressions for the basis functions associated with the enriched triple $\mathcal{H}_{2}^{\omega}$.  
Specifically, we determine a basis
\begin{equation*}
   \mathbf{B}_{2,\omega}=\left\{\varphi_{i}, \psi_{i} \, :\, i=1,2,3\right\}
\end{equation*}
of the vector space $\mathbb{P}_{2}(T)$ satisfying the conditions
\begin{align*}
{{\mathcal{I}}}_{j}^{\omega}\left(  \varphi_{i}\right) &=  \delta_{ij},
  \\ 
{{\mathcal{L}}}^{\omega,2}_{j}\left(\varphi_{i}\right) &=  0,  \\ 
{{\mathcal{I}}}_j^{\omega}\left(\psi_{i}\right) &= 0, \\ {{\mathcal{L}}}^{\omega,2}_{j}\left(\psi_{i}\right) &=  \delta_{ij}, 
\end{align*}
for any $i,j=1,2,3$. 
\begin{theorem}\label{newth}
 The basis functions of the enriched triple $\mathcal{H}_2^{\omega}$ can be written as
 \begin{eqnarray*}
\varphi_j&=&\sigma_0\lambda_j+\sigma_+\lambda_{j+1}+\sigma_{-}\lambda_{j+2}, \quad j=1,2,3,\\
\psi_j&=& \nu_0\lambda_j+\nu_+\lambda_{j+1}+\nu_{-}\lambda_{j+2}-\frac{4}{\rho_2}\lambda_{j+1}\lambda_{j+2}, \quad j=1,2,3, 
 \end{eqnarray*}
 where $\sigma_0,\sigma_{+},\sigma_{-},\tau,\nu_0,\nu_{+},\nu_{-}$ are defined in~\eqref{sigmas} and~\eqref{tausnus}.
\end{theorem}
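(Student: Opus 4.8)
The plan is to invoke Theorem~\ref{theo:generalPk} in the specific case $k=2$, for which all preparatory work has already been carried out. Concretely, I would take the ordered list of degrees of freedom to be $\left(\mathcal{I}_1^\omega,\mathcal{I}_2^\omega,\mathcal{I}_3^\omega,\mathcal{L}_1^{\omega,2},\mathcal{L}_2^{\omega,2},\mathcal{L}_3^{\omega,2}\right)$ and the basis $\boldsymbol{\phi}$ of $\mathbb{P}_2(T)$ to be the vector built from $\B{B}_6$, that is $\boldsymbol{\phi}=\left(\lambda_1,\lambda_2,\lambda_3,\lambda_1\lambda_2,\lambda_3\lambda_1,\lambda_2\lambda_3\right)^\top$. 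With these identifications, Theorem~\ref{theo:generalPk} asserts that the $\ell$-th basis function of $\mathcal{H}_2^\omega$ equals $\langle\tilde{\boldsymbol{h}}_\ell,\boldsymbol{\phi}\rangle$, where $\tilde{\boldsymbol{h}}_\ell$ is the $\ell$-th column of $\widehat{H}^{-1}$. Since the first three degrees of freedom are the integral functionals $\mathcal{I}_j^\omega$ and the last three are the moment functionals $\mathcal{L}_j^{\omega,2}$, the first three columns produce the functions $\varphi_1,\varphi_2,\varphi_3$ (dual to the $\mathcal{I}_j^\omega$ and annihilated by the $\mathcal{L}_j^{\omega,2}$), while the last three columns produce $\psi_1,\psi_2,\psi_3$.

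It then remains only to read off the columns of the explicit inverse~\eqref{invmat} and pair them with the corresponding entries of $\boldsymbol{\phi}$. For the first column $\tilde{\boldsymbol{h}}_1=(\sigma_0,\sigma_+,\sigma_-,0,0,0)^\top$ this yields $\varphi_1=\sigma_0\lambda_1+\sigma_+\lambda_2+\sigma_-\lambda_3$, and analogously for the remaining columns. The one point requiring care is the bookkeeping of the cyclic structure: I would verify that columns $1,2,3$ are cyclic shifts of the triple $(\sigma_0,\sigma_+,\sigma_-)$, reproducing exactly the pattern $\sigma_0\lambda_j+\sigma_+\lambda_{j+1}+\sigma_-\lambda_{j+2}$ under the convention $\lambda_4=\lambda_1,\ \lambda_5=\lambda_2$ fixed at the beginning of the section. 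The same check applied to columns $4,5,6$ shows that the linear part of $\psi_j$ is $\nu_0\lambda_j+\nu_+\lambda_{j+1}+\nu_-\lambda_{j+2}$, while the single nonzero entry $-4/\rho_2$ in the lower block selects precisely the quadratic monomial $\lambda_{j+1}\lambda_{j+2}$, giving the claimed $-\tfrac{4}{\rho_2}\lambda_{j+1}\lambda_{j+2}$ term.

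Because the construction of $\widehat{H}$ through Lemmas~\ref{lemma1}--\ref{lemma4} and the computation of its inverse have already been completed, there is no genuine analytical obstacle remaining; the proof is essentially a transcription of the columns of~\eqref{invmat}. If one preferred a self-contained argument instead of appealing to Theorem~\ref{theo:generalPk}, the alternative would be to substitute the proposed $\varphi_j,\psi_j$ into the four defining biorthogonality conditions and confirm them directly via Lemmas~\ref{lemma1}--\ref{lemma4}; this is more laborious but rests on the same moment identities. In either route the only genuine source of error to guard against is a mismatch between the ordering of the degrees of freedom and the ordering of the basis $\B{B}_6$, which would permute the roles of $\sigma_\pm$ and $\nu_\pm$.
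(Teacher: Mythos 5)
Your proposal is correct and follows exactly the paper's own route: the paper proves Theorem~\ref{newth} by citing Theorem~\ref{theo:generalPk} together with the explicit inverse~\eqref{invmat}, which is precisely your argument (your column-by-column bookkeeping of the cyclic structure, which checks out against~\eqref{invmat} and the ordering of $\B{B}_6$, is simply a more detailed transcription of what the paper leaves implicit).
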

\begin{proof}
 The result follows directly from Theorem~\ref{theo:generalPk} and the inverse matrix~\eqref{invmat}.
\end{proof}

\begin{remark}
If the probability density $\omega$ is symmetric with respect to the origin, then its odd moments vanish, in particular 
$m_{1,\omega}=0$. In this case the coefficients defined above simplify to
\begin{equation*}
\sigma_0 = -1, 
\quad \sigma_+ = \sigma_- = 1, 
\quad \nu_0 = -\tau, 
\quad \nu_+ = \nu_- = \tau,    
\end{equation*}
where $$\tau = \dfrac{1-m_{2,\omega}}{\rho_2}.$$
Consequently, the expressions of the basis functions $\varphi_j,\psi_j,$ $j=1,2,3$ given in Theorem~\ref{newth} 
become significantly simpler 
\begin{equation*}
    \varphi_j = 1-2\lambda_j, 
\quad 
\psi_j = \tau \varphi_j -\frac{4}{\rho_2}\lambda_{j+1}\lambda_{j+2}.  
\end{equation*}
Moreover, the expression of $\rho_2$, given in~\eqref{rhomoment}, simplifies to
\begin{equation*}
    \rho_2 = m_{4,\omega} - m_{2,\omega}^2.
\end{equation*}    
\end{remark}

\section{Jacobi-type probability density}\label{sec2}
In this section, we introduce a two-parameter family of admissible edge densities
provided by the Jacobi probability distribution on $[-1,1]$. Specifically, we set
\begin{equation}\label{jacobi-pdf}
\omega_{\alpha,\beta}(t) = a_{\alpha,\beta}(1-t)^{\alpha}(1+t)^{\beta},
\quad t \in [-1,1], \quad \alpha,\beta>-1,
\end{equation}
where $a_{\alpha,\beta}$ is a normalization constant. To study this case, we denote by 
\begin{equation*}
B(x,y)=\int_{0}^{1} u^{x-1}(1-u)^{y-1}du, \quad x,y>0,    
\end{equation*}
the classical Euler Beta function~\cite{Abramowitz:1948:HOM}. This function satisfies, among others, the following properties:
\begin{itemize}
    \item \textbf{Symmetry:} $B\left(x,y\right)=B\left(y,x\right)$ for all $x,y>0$,\\
    \item \textbf{Recurrence relation:} $B(x+1,y)=\dfrac{x}{x+y}B(x,y)$ for all $x,y>0$.
\end{itemize}

\begin{lemma}\label{lem:jacobi-normalization}
The function $\omega_{\alpha,\beta}$ is a probability density function on $[-1,1]$ 
if and only if
\begin{equation*}
    a_{\alpha,\beta}=\frac{1}{2^{\alpha+\beta+1}B(\alpha+1,\beta+1)}.
\end{equation*}
\end{lemma}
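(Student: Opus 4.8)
The plan is to compute the normalization integral $\int_{-1}^1 \omega_{\alpha,\beta}(t)\,dt$ directly and impose that it equals $1$, which is the defining condition for a probability density since the weight $(1-t)^\alpha(1+t)^\beta$ is already nonnegative on $[-1,1]$ whenever $a_{\alpha,\beta}>0$. Concretely, the statement will follow once I show
\begin{equation*}
\int_{-1}^1 (1-t)^{\alpha}(1+t)^{\beta}\,dt = 2^{\alpha+\beta+1}B(\alpha+1,\beta+1),
\end{equation*}
because then $\int_{-1}^1 \omega_{\alpha,\beta}(t)\,dt = a_{\alpha,\beta}\cdot 2^{\alpha+\beta+1}B(\alpha+1,\beta+1)$, and setting this equal to $1$ forces exactly the claimed value of $a_{\alpha,\beta}$; conversely, that value of $a_{\alpha,\beta}$ makes the integral equal $1$.

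The key step is the substitution that turns the symmetric interval $[-1,1]$ into the unit interval $[0,1]$ on which the Beta function is defined. I would set $t = 2u - 1$, so that $u = (1+t)/2$ ranges over $[0,1]$ as $t$ ranges over $[-1,1]$, with $dt = 2\,du$. Under this change of variables, $1+t = 2u$ and $1-t = 2(1-u)$, hence
\begin{equation*}
\int_{-1}^1 (1-t)^{\alpha}(1+t)^{\beta}\,dt
= \int_0^1 \bigl(2(1-u)\bigr)^{\alpha}(2u)^{\beta}\cdot 2\,du
= 2^{\alpha+\beta+1}\int_0^1 u^{\beta}(1-u)^{\alpha}\,du.
\end{equation*}
Recognizing the remaining integral as $B(\beta+1,\alpha+1)$ by the definition of the Euler Beta function, and applying the symmetry property $B(\beta+1,\alpha+1)=B(\alpha+1,\beta+1)$ stated in the excerpt, yields the desired identity.

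There is no serious obstacle here; the argument is a routine but clean application of the standard Beta-integral evaluation. The only point requiring a word of care is the convergence of the integral at the endpoints: near $t=1$ the integrand behaves like $(1-t)^\alpha$ and near $t=-1$ like $(1+t)^\beta$, so integrability is guaranteed precisely by the hypothesis $\alpha,\beta>-1$, which is exactly the range in which $B(\alpha+1,\beta+1)$ is finite and positive. I would note explicitly that positivity of $B(\alpha+1,\beta+1)$ ensures $a_{\alpha,\beta}>0$, so that $\omega_{\alpha,\beta}\geq 0$ throughout $[-1,1]$, completing the verification that both conditions defining a probability density are met.
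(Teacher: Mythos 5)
Your proof is correct and follows essentially the same route as the paper: the change of variables $u=(1+t)/2$ reducing the integral to $2^{\alpha+\beta+1}B(\beta+1,\alpha+1)$, followed by the symmetry of the Beta function. Your additional remarks on endpoint integrability for $\alpha,\beta>-1$ and positivity of $a_{\alpha,\beta}$ are sound refinements of the same argument, not a different approach.
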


\begin{proof}
To determine the normalization constant, we 
perform the change of variables $$u=\frac{1+t}{2}.$$ Then, we have 
\begin{eqnarray*}
\int_{-1}^{1}\omega_{\alpha,\beta}(t)dt&=&
a_{\alpha,\beta}\int_{-1}^{1}(1-t)^{\alpha}(1+t)^{\beta}dt\\
&=&  2^{\alpha+\beta+1} a_{\alpha,\beta}\int_{0}^{1} u^{\beta}(1-u)^{\alpha}du
=  2^{\alpha+\beta+1} a_{\alpha,\beta} B(\beta+1,\alpha+1).   
\end{eqnarray*}
By the symmetry property $B(\alpha,\beta)=B(\beta,\alpha)$, 
the normalization condition holds if and only if
\begin{equation*}
    a_{\alpha,\beta}=\frac{1}{2^{\alpha+\beta+1}B(\alpha+1,\beta+1)},
\end{equation*}
which proves the claim.
\end{proof}

Therefore, in the following we shall work with the normalized Jacobi density
\begin{equation}\label{jacobi-normalized}
\omega_{\alpha,\beta}(t)=\frac{(1-t)^{\alpha}(1+t)^{\beta}}
     {2^{\alpha+\beta+1}B(\alpha+1,\beta+1)}.
\end{equation}
    \begin{figure}
      \centering
\includegraphics[width=0.49\linewidth]{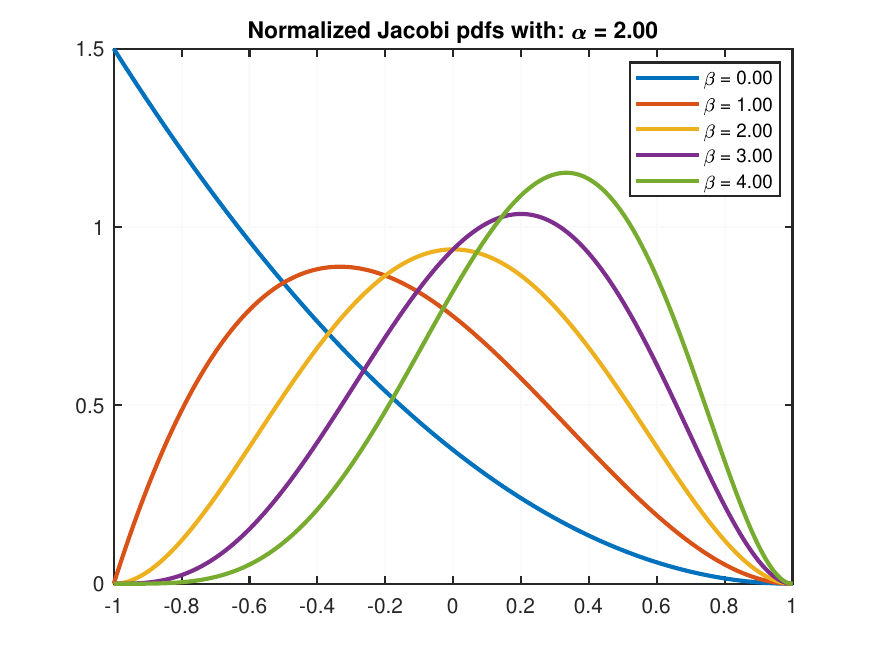}
\includegraphics[width=0.49\linewidth]{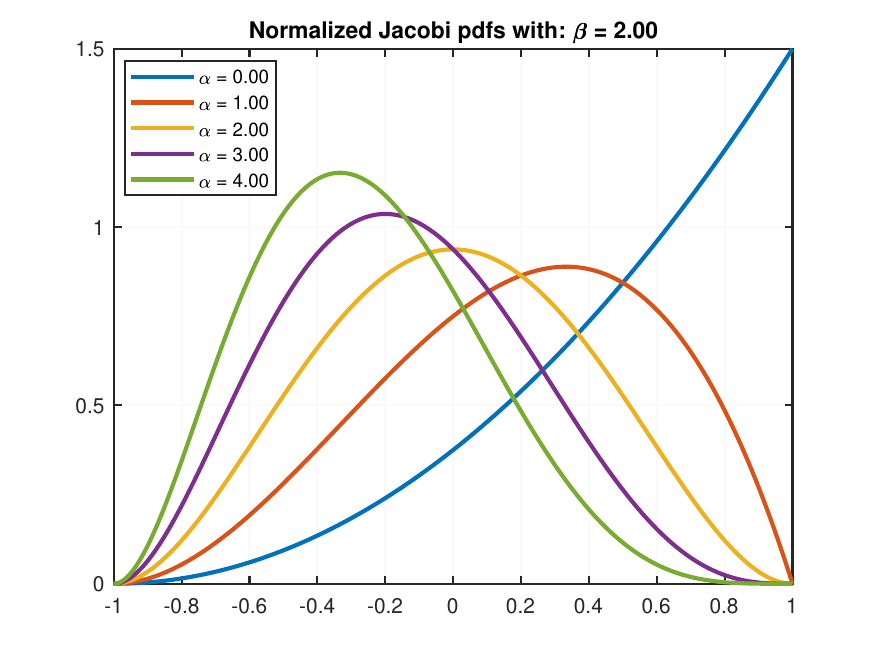}
      \caption{Normalized Jacobi probability density functions $\omega_{\alpha,\beta}$ on $[-1,1]$.
Left: effect of the parameter $\beta$, with fixed $\alpha = 2$ and different values of $\beta$.
Right: effect of the parameter $\alpha$, with fixed $\beta = 2$ and varying values of $\alpha$, illustrating how the weight near the endpoints $t=\pm 1$ can be shifted and modulated.
These plots highlight the flexibility of Jacobi densities in shaping local features through the choice of $\alpha$ and $\beta$.}
      \label{fig:densities}
  \end{figure}
As shown in Figure~\ref{fig:densities}, increasing $\alpha$ or $\beta$ shifts the density toward the endpoints $t=-1$ and $t=+1$, respectively, demonstrating the local shaping effect of the Jacobi parameters. To simplify the notation, in the following we write $\mathcal{H}_2^{\alpha,\beta}$.

In order to characterize the action of the Jacobi density within our histopolation framework,
we compute its moments. The first two moments admit simple closed forms,
while higher-order moments can be expressed in terms of Beta functions.

\begin{lemma}\label{lem:jacobi-moments}
Let $\omega_{\alpha,\beta}$ be the normalized Jacobi density defined in~\eqref{jacobi-normalized}. Then its first and second moments are given by
\begin{align*}
m_{1,\omega} &= \int_{-1}^1 t\omega_{\alpha,\beta}(t)dt 
= \frac{\beta-\alpha}{\alpha+\beta+2},\\[0.5em]
m_{2,\omega} &= \int_{-1}^1 t^2\omega_{\alpha,\beta}(t)dt 
= \frac{(\alpha-\beta)^2+(\alpha+\beta+2)}{(\alpha+\beta+2)(\alpha+\beta+3)}.
\end{align*}
\end{lemma}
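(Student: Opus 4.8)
The plan is to compute the two moments directly from the defining integral using the same change of variables $u=(1+t)/2$ that was used in Lemma~\ref{lem:jacobi-normalization}, thereby reducing everything to Beta functions, and then to simplify via the recurrence relation $B(x+1,y)=\frac{x}{x+y}B(x,y)$. First I would substitute $u=(1+t)/2$, so that $t=2u-1$, $1-t=2(1-u)$, $1+t=2u$, and $dt=2\,du$. Under this substitution the normalized density contributes a factor that, together with the Jacobian, produces
\begin{equation*}
m_{n,\omega}=\frac{1}{B(\alpha+1,\beta+1)}\int_0^1 (2u-1)^n\, u^{\beta}(1-u)^{\alpha}\,du,
\end{equation*}
after the powers of $2$ cancel exactly as in the normalization lemma. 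This recasts each moment as a combination of integrals of the form $\int_0^1 u^{\beta+s}(1-u)^{\alpha}\,du=B(\beta+s+1,\alpha+1)$ for small nonnegative integers $s$.

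For the first moment, expanding $(2u-1)^1=2u-1$ gives $m_{1,\omega}=\frac{1}{B(\alpha+1,\beta+1)}\bigl(2B(\beta+2,\alpha+1)-B(\beta+1,\alpha+1)\bigr)$. Applying the recurrence to $B(\beta+2,\alpha+1)=\frac{\beta+1}{\alpha+\beta+2}B(\beta+1,\alpha+1)$ lets the remaining Beta factor cancel against the normalizing $B(\alpha+1,\beta+1)$ (equal by symmetry), leaving the purely rational expression $\frac{2(\beta+1)}{\alpha+\beta+2}-1=\frac{\beta-\alpha}{\alpha+\beta+2}$. The second moment proceeds identically: expand $(2u-1)^2=4u^2-4u+1$, convert to $4B(\beta+3,\alpha+1)-4B(\beta+2,\alpha+1)+B(\beta+1,\alpha+1)$, and apply the recurrence twice, using $B(\beta+3,\alpha+1)=\frac{(\beta+2)(\beta+1)}{(\alpha+\beta+3)(\alpha+\beta+2)}B(\beta+1,\alpha+1)$. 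After dividing out the common Beta factor and putting everything over the common denominator $(\alpha+\beta+2)(\alpha+\beta+3)$, algebraic simplification yields the claimed form $\frac{(\alpha-\beta)^2+(\alpha+\beta+2)}{(\alpha+\beta+2)(\alpha+\beta+3)}$.

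There is no genuine analytic obstacle here; the only real care is bookkeeping. The main point to watch is ensuring that the powers of $2$ from $(1-t)^\alpha=2^\alpha(1-u)^\alpha$, $(1+t)^\beta=2^\beta u^\beta$, $(2u-1)^n$, and $dt=2\,du$ combine so that they cancel cleanly against the $2^{\alpha+\beta+1}$ in the normalization constant, exactly as in Lemma~\ref{lem:jacobi-normalization}; any slip in tracking these factors is the most likely source of error. A secondary routine step is the final rational-function simplification for $m_{2,\omega}$, where one must verify that the numerator collapses to $(\alpha-\beta)^2+(\alpha+\beta+2)$ — this is a short but attention-demanding polynomial identity. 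Alternatively, one may invoke the standard known formulas for the mean and variance of the Beta distribution $\mathrm{Beta}(\beta+1,\alpha+1)$ on $[0,1]$ and transform back via $t=2u-1$, which shortcuts the recurrence manipulations, though carrying out the substitution explicitly is the more self-contained route.
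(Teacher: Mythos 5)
Your proposal is correct and follows essentially the same route as the paper's own proof: the substitution $u=(1+t)/2$ reducing the moments to $\frac{1}{B(\alpha+1,\beta+1)}\int_0^1(2u-1)^n u^\beta(1-u)^\alpha\,du$, expansion into Beta functions, and simplification via the recurrence $B(x+1,y)=\frac{x}{x+y}B(x,y)$. The algebra checks out, including the collapse of the numerator to $(\alpha-\beta)^2+(\alpha+\beta+2)$, so nothing is missing.
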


\begin{proof}
Using the change of variables
\begin{equation*}
u=\frac{1+t}{2},    
\end{equation*}
we obtain
\begin{align*}
m_{1,\omega} &= \int_{-1}^1 t\omega_{\alpha,\beta}(t)dt 
= \frac{1}{2^{\alpha+\beta+1}B(\alpha+1,\beta+1)} 
\int_{-1}^1 t(1-t)^{\alpha}(1+t)^{\beta}dt \\
&= \frac{1}{B(\alpha+1,\beta+1)}
\int_0^1 (2u-1)u^{\beta}(1-u)^{\alpha}du \\
&= \frac{1}{B(\alpha+1,\beta+1)}\left[2B(\alpha+1,\beta+2)-B(\alpha+1,\beta+1)\right].
\end{align*}
Using the recurrence relation of the beta function, we have
\begin{equation*}
    m_{1,\omega} = \frac{2(\beta+1)}{\alpha+\beta+2}-1=\frac{\beta-\alpha}{\alpha+\beta+2}.
\end{equation*}
We can proceed analogously for the second moment and obtain
\begin{align*}
m_{2,\omega} &= \frac{1}{B(\alpha+1,\beta+1)}
\int_0^1 (2u-1)^2u^{\beta}(1-u)^{\alpha}du \\
&= \frac{1}{B(\alpha+1,\beta+1)}\left[
4B(\beta+3,\alpha+1)-4B(\beta+2,\alpha+1)+B(\beta+1,\alpha+1)\right].
\end{align*}
Applying recurrence formulas for the Beta function, by easy calculations, we get
\begin{equation*}
    m_{2,\omega}=\frac{(\alpha-\beta)^2+(\alpha+\beta+2)}{(\alpha+\beta+2)(\alpha+\beta+3)}.
\end{equation*}
This completes the proof.
\end{proof}

We now compute the higher-order moments needed in the definition of $\rho_2$. 
In particular, the third and fourth moments of the Jacobi density admit closed forms.
\begin{lemma}\label{lem:jacobi-m3m4}
Let $\omega_{\alpha,\beta}$ be the normalized Jacobi density defined in~\eqref{jacobi-normalized}. Then
\begin{align*}
m_{3,\omega} &=\frac{(\beta-\alpha)\left[(\alpha-\beta)^2 + 3(\alpha+\beta) + 8\right]}
{(\alpha+\beta+2)(\alpha+\beta+3)(\alpha+\beta+4)},\\[0.6em]
m_{4,\omega} &= \frac{(\alpha-\beta)^4 + 6(\alpha-\beta)^2(\alpha+\beta) + 20(\alpha-\beta)^2 + 3(\alpha+\beta)^2 + 18(\alpha+\beta) + 24}
{(\alpha+\beta+2)(\alpha+\beta+3)(\alpha+\beta+4)(\alpha+\beta+5)}.
\end{align*}
\end{lemma}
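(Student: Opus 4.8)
The plan is to follow exactly the strategy of Lemma~\ref{lem:jacobi-moments}, now carrying the binomial expansion to third and fourth order. First I would apply the change of variables $u=(1+t)/2$, under which $t=2u-1$, $1-t=2(1-u)$, $1+t=2u$, and $dt=2\,du$. The powers of $2$ produced by these substitutions combine to $2^{\alpha+\beta+1}$, which cancels the same factor in the normalization constant, reducing each moment to
\begin{equation*}
m_{n,\omega} = \frac{1}{B(\alpha+1,\beta+1)} \int_0^1 (2u-1)^n u^{\beta}(1-u)^{\alpha}\,du.
\end{equation*}
Expanding $(2u-1)^n$ by the binomial theorem and integrating term by term against $u^{\beta}(1-u)^{\alpha}$ yields a finite sum of Beta functions,
\begin{equation*}
m_{n,\omega} = \frac{1}{B(\alpha+1,\beta+1)} \sum_{j=0}^{n} \binom{n}{j} 2^{j}(-1)^{n-j} B(\beta+j+1,\alpha+1),
\end{equation*}
where I have used $\int_0^1 u^{\beta+j}(1-u)^{\alpha}\,du = B(\beta+j+1,\alpha+1)$.

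Next I would eliminate the Beta functions by iterating the recurrence $B(x+1,y)=\frac{x}{x+y}B(x,y)$, which produces the telescoping ratio
\begin{equation*}
\frac{B(\beta+j+1,\alpha+1)}{B(\beta+1,\alpha+1)} = \prod_{i=1}^{j} \frac{\beta+i}{\alpha+\beta+i+1}.
\end{equation*}
Since $B(\beta+1,\alpha+1)=B(\alpha+1,\beta+1)$ by symmetry, substituting this ratio cancels the normalization entirely and leaves a purely rational expression in $\alpha,\beta$. For $n=3$ the four resulting terms share the common denominator $(\alpha+\beta+2)(\alpha+\beta+3)(\alpha+\beta+4)$, and for $n=4$ the five terms share $(\alpha+\beta+2)(\alpha+\beta+3)(\alpha+\beta+4)(\alpha+\beta+5)$, exactly matching the denominators asserted in the statement.

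The remaining task is to place the terms over the common denominator and simplify the numerator, and this is the main obstacle: the computation is entirely elementary but lengthy, and the difficulty lies in recognizing that the numerators collapse into the compact forms displayed in the lemma, organized in the variables $\alpha-\beta$ and $\alpha+\beta$. A useful structural check comes from the parity of the density: since $\omega_{\alpha,\beta}(-t)=\omega_{\beta,\alpha}(t)$, one has $m_{n,\omega_{\alpha,\beta}}=(-1)^{n}m_{n,\omega_{\beta,\alpha}}$, so the numerator of $m_{3,\omega}$ must be antisymmetric under $\alpha\leftrightarrow\beta$ (forcing the overall factor $\beta-\alpha$), while that of $m_{4,\omega}$ must be symmetric. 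Expanding directly in $\alpha-\beta$ and $\alpha+\beta$ respects this symmetry from the outset and, after collecting like terms, confirms the stated closed forms.
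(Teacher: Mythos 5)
Your proposal is correct and takes essentially the same approach as the paper's proof: the substitution $u=(1+t)/2$, binomial expansion of $(2u-1)^n$ into a combination of Beta integrals, reduction via the recurrence $B(x+1,y)=\frac{x}{x+y}B(x,y)$, and a routine (if lengthy) algebraic collection of terms. As a minor point in your favor, your general formula carries the correct Beta indices $B(\beta+j+1,\alpha+1)$, whereas the paper's displayed intermediate expressions contain an off-by-one slip (e.g.\ $8B(\beta+3,\alpha+1)$ where $8B(\beta+4,\alpha+1)$ is meant, though its final formulas are right), and your parity check $m_{n,\omega_{\alpha,\beta}}=(-1)^n m_{n,\omega_{\beta,\alpha}}$, which forces the factor $\beta-\alpha$ in the numerator of $m_{3,\omega}$, is a useful structural verification the paper does not include.
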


\begin{proof}
Using the change of variables $u=\tfrac{1+t}{2}$, we have
\begin{equation*}
    m_{k,\omega}=\int_{-1}^1 t^k\omega_{\alpha,\beta}(t)dt
= \frac{1}{B(\alpha+1,\beta+1)}\int_0^1 (2u-1)^ku^{\beta}(1-u)^{\alpha}du.
\end{equation*}
Expanding $(2u-1)^3=8u^3-12u^2+6u-1$ and integrating term by term gives
\begin{align*}
m_{3,\omega}
&= \frac{8B(\beta+3,\alpha+1)-12B(\beta+2,\alpha+1)+6B(\beta+1,\alpha+1)-B(\beta,\alpha+1)}{B(\alpha+1,\beta+1)}.
\end{align*}
Applying the recurrence relation of the Beta function and simplifying, we get
\begin{equation*}
m_{3,\omega}
= \frac{(\beta-\alpha)\left[(\alpha-\beta)^2 + 3(\alpha+\beta) + 8\right]}
{(\alpha+\beta+2)(\alpha+\beta+3)(\alpha+\beta+4)}.  
\end{equation*}
Similarly, using the expansion $(2u-1)^4=16u^4-32u^3+24u^2-8u+1$, we find
\begin{align*}
m_{4,\omega}
&= \frac{1}{B(\alpha+1,\beta+1)}\!\left[
16B(\beta+4,\alpha+1)-32B(\beta+3,\alpha+1)+24B(\beta+2,\alpha+1)\right.\\
&\hspace{8.3em}\left.-8B(\beta+1,\alpha+1)+B(\beta,\alpha+1)
\right].
\end{align*}
Again, applying the recurrence relation leads to
\begin{equation*}
    m_{4,\omega}
= \frac{(\alpha-\beta)^4 + 6(\alpha-\beta)^2(\alpha+\beta) + 20(\alpha-\beta)^2 + 3(\alpha+\beta)^2 + 18(\alpha+\beta) + 24}
{(\alpha+\beta+2)(\alpha+\beta+3)(\alpha+\beta+4)(\alpha+\beta+5)}.
\end{equation*}
This completes the proof.
\end{proof}

\begin{remark}
For the Jacobi-type probability density it is also possible to derive an explicit closed form
for the quantity $\rho_2$ in terms of the parameters $\alpha$ and $\beta$. 
Indeed, by combining the expressions of the first four moments provided in 
Lemmas~\ref{lem:jacobi-moments} and~\ref{lem:jacobi-m3m4}, and applying the formula
\begin{equation*}
    \rho_2= m_{4,\omega} - m_{2,\omega}^2 - 
\frac{\left(m_{3,\omega}-m_{1,\omega}m_{2,\omega}\right)^2}{m_{2,\omega}-m_{1,\omega}^2},
\end{equation*}
after straightforward simplifications we obtain
\begin{equation}\label{rhosimpalpha}
\rho_2 =
\frac{32(\alpha+1)(\alpha+2)(\beta+1)(\beta+2)}
{(\alpha+\beta+2)(\alpha+\beta+3)^2(\alpha+\beta+4)^2(\alpha+\beta+5)}.
\end{equation}
\end{remark}

We now give the explicit expression of the basis functions associated with the triple  $\mathcal{H}^{\alpha,\beta}_2$ relative to the Jacobi density function.  

\begin{theorem}\label{th:basis-jacobi-explicit}
Let $\omega_{\alpha,\beta}$ be the Jacobi-type probability density function.
Then, the basis functions associated to the triple $\mathcal{H}^{\alpha,\beta}_2$ relative to $\omega_{\alpha,\beta}$ are given by 
\begin{align*}
\varphi_j &= \sigma_0\lambda_j
+ \sigma_+\lambda_{j+1}
+ \sigma_-\lambda_{j+2},
\quad j=1,2,3,\\[0.4em]
\psi_j &= \nu_0(\alpha,\beta)\lambda_j
+ \nu_+\lambda_{j+1}
+ \nu_-\lambda_{j+2}
- \frac{4}{\rho_2}\lambda_{j+1}\lambda_{j+2},
\quad j=1,2,3,
\end{align*}
where 
\begin{align*}
\sigma_0 &= -\frac{(\alpha+1)(\beta+1)}{\alpha^2-\alpha\beta+\beta^2+\alpha+\beta+1},\\[0.3em]
\sigma_+ &= \frac{(\beta+1)^2}{\alpha^2-\alpha\beta+\beta^2+\alpha+\beta+1},\\
\sigma_-&= \frac{(\alpha+1)^2}{\alpha^2-\alpha\beta+\beta^2+\alpha+\beta+1},\\
\tau
&= \frac{(\alpha+\beta+2)^2(\alpha+\beta+3)(\alpha+\beta+4)^2(\alpha+\beta+5)}
       {32(\alpha+2)(\beta+2)\left(\alpha^2-\alpha\beta+\beta^2+\alpha+\beta+1\right)},\\[0.4em]
\nu_0 &= -\frac{(\alpha+\beta+3)(\alpha+\beta+4)^2(\alpha+\beta+5)(\alpha+1)(\beta+1)}
                              {8(\alpha+2)(\beta+2)\left(\alpha^2-\alpha\beta+\beta^2+\alpha+\beta+1\right)},\\[0.3em]
\nu_+ &= \frac{(\alpha+\beta+3)(\alpha+\beta+4)^2(\alpha+\beta+5)(\beta+1)^2}
                             {8(\alpha+2)(\beta+2)\left(\alpha^2-\alpha\beta+\beta^2+\alpha+\beta+1\right)},\\[0.3em]
\nu_- &= \frac{(\alpha+\beta+3)(\alpha+\beta+4)^2(\alpha+\beta+5)(\alpha+1)^2}
                             {8(\alpha+2)(\beta+2)\left(\alpha^2-\alpha\beta+\beta^2+\alpha+\beta+1\right)}
\end{align*}
and $\rho_2$ is given in~\eqref{rhosimpalpha}. 
\end{theorem}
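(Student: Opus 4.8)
The plan is to specialize the general result of Theorem~\ref{newth} to the Jacobi weight $\omega_{\alpha,\beta}$. That theorem already fixes the functional form of $\varphi_j$ and $\psi_j$ in terms of the seven scalars $\sigma_0,\sigma_+,\sigma_-,\tau,\nu_0,\nu_+,\nu_-$ defined in~\eqref{sigmas} and~\eqref{tausnus}; consequently nothing new about the structure of the basis must be proved. The entire task reduces to evaluating these seven quantities under the Jacobi density and checking that they agree with the stated closed forms. All the raw material is already available: Lemma~\ref{lem:jacobi-moments} gives $m_{1,\omega}$ and $m_{2,\omega}$, while~\eqref{rhosimpalpha} gives $\rho_2$.

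First I would record the three elementary building blocks that recur in every coefficient. Starting from $m_{1,\omega}=(\beta-\alpha)/(\alpha+\beta+2)$, a short expansion gives
\begin{equation*}
3m_{1,\omega}^2+1=\frac{4\left(\alpha^2-\alpha\beta+\beta^2+\alpha+\beta+1\right)}{(\alpha+\beta+2)^2},
\end{equation*}
which is precisely what produces the common denominator appearing in all seven coefficients. A difference-of-squares factorization then yields
\begin{equation*}
m_{1,\omega}^2-1=-\frac{4(\alpha+1)(\beta+1)}{(\alpha+\beta+2)^2},\qquad
(m_{1,\omega}+1)^2=\frac{4(\beta+1)^2}{(\alpha+\beta+2)^2},\qquad
(m_{1,\omega}-1)^2=\frac{4(\alpha+1)^2}{(\alpha+\beta+2)^2}.
\end{equation*}
Dividing each of these three quantities by $3m_{1,\omega}^2+1$ cancels the common factor $4/(\alpha+\beta+2)^2$ and reproduces $\sigma_0$, $\sigma_+$, and $\sigma_-$ immediately.

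For the remaining coefficients I would first establish $\tau$, because $\nu_0=\tau(m_{1,\omega}^2-1)$ and $\nu_\pm=\tau(m_{1,\omega}\pm1)^2$ then follow at once from the factorizations just obtained. The crucial additional identity is
\begin{equation*}
1-m_{2,\omega}=\frac{(\alpha+\beta+2)^2-(\alpha-\beta)^2}{(\alpha+\beta+2)(\alpha+\beta+3)}=\frac{4(\alpha+1)(\beta+1)}{(\alpha+\beta+2)(\alpha+\beta+3)},
\end{equation*}
again obtained by factoring a difference of squares. Inserting this together with~\eqref{rhosimpalpha} and the formula for $3m_{1,\omega}^2+1$ into $\tau=(1-m_{2,\omega})/\bigl[\rho_2\,(3m_{1,\omega}^2+1)\bigr]$, the factors $(\alpha+1)(\beta+1)$ cancel and the various powers of $(\alpha+\beta+2)$ and $(\alpha+\beta+3)$ collapse, leaving the claimed expression for $\tau$; multiplying by the three building blocks then gives $\nu_0,\nu_+,\nu_-$.

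The only real obstacle is bookkeeping: the computation of $\tau$ requires combining three rational expressions, each carrying several linear factors in $\alpha,\beta$ and different powers of the sums $\alpha+\beta+c$, and tracking every cancellation correctly. Once the three identities above are in place, however, every simplification is forced and no further idea is needed beyond careful algebra.
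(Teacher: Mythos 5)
Your proposal is correct and follows essentially the same route as the paper: the paper's own proof is a one-line appeal to Theorem~\ref{newth} together with the Jacobi moment formulas, and you simply carry out the algebra that the paper leaves implicit (your factorizations of $3m_{1,\omega}^2+1$, $m_{1,\omega}^2-1$, $(m_{1,\omega}\pm1)^2$, and $1-m_{2,\omega}$ all check out, and the cancellations in $\tau$ yield exactly the stated coefficients). The only cosmetic difference is that you invoke the closed form~\eqref{rhosimpalpha} for $\rho_2$ directly instead of citing Lemma~\ref{lem:jacobi-m3m4}, but that formula is itself established in the paper from those same moments, so the logical content is identical.
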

\begin{proof}
 The claim follows directly from Lemmas~\ref{lem:jacobi-moments} and~\ref{lem:jacobi-m3m4} together with Theorem~\ref{newth}.  
\end{proof}

Then, we define the reconstruction operator associated to the triple $\mathcal{H}^{\alpha,\beta}_2$ relative to the Jacobi density function as
\begin{equation}\label{pi11}
\begin{array}{rcl}
\Pi_{2}^{\alpha,\beta}\colon C(T) &\longrightarrow& \mathbb{P}_{2}(T) \\[1ex]
f &\longmapsto& \displaystyle\sum_{j=1}^{3} 
   \mathcal{I}^{\omega}_{j}(f)\varphi_{j}+\sum_{j=1}^{3} 
   \mathcal{L}^{\omega,2}_{j}(f)\psi_{j}. 
\end{array}
\end{equation}

\subsection{Gegenbauer-type probability density}

A particularly relevant situation arises when the parameters of the Jacobi-type
probability density coincide, i.e.\ $$\alpha=\beta=\gamma.$$ In this case the
distribution is symmetric with respect to the origin, and will be referred to
as a \emph{Gegenbauer-type probability density}. The probability density function~\eqref{jacobi-normalized} then reduces to 
\begin{equation}\label{gegen-normalized}
\omega_{\gamma}(t)=\frac{(1-t^2)^{\gamma}}
     {2^{2\gamma+1}B(\gamma+1,\gamma+1)}.
\end{equation}
 \begin{figure}
      \centering
\includegraphics[width=0.49\linewidth]{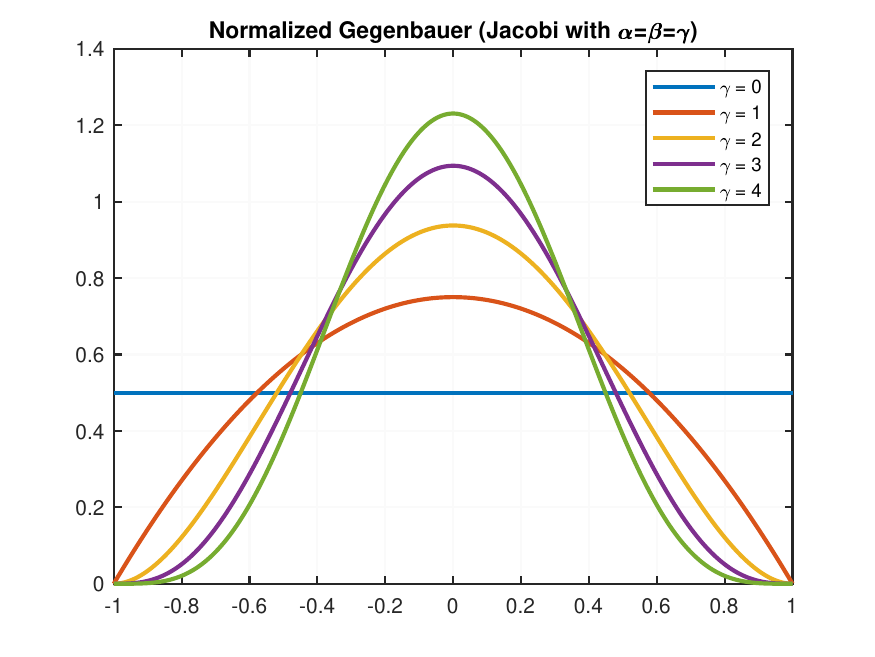}
      \caption{Normalized Gegenbauer probability density functions $\omega_{\gamma}$ on $[-1,1]$
for different values of $\gamma$. As $\gamma$ increases, the density becomes more concentrated around
$t=0$ and vanishes more rapidly near the endpoints $t=\pm 1$. This illustrates how the single
parameter $\gamma$ controls the balance between central concentration and endpoint weight.}
      \label{fig:densities1}
  \end{figure}
As shown in Figure~\ref{fig:densities1}, increasing $\gamma$ enhances the sharpness of the distribution near the center and reduces its mass near the endpoints, thus providing a simple and flexible symmetric weighting mechanism.

The symmetry immediately
implies the vanishing of all odd moments, i.e.
\begin{equation*}
m_{1,\omega}=m_{3,\omega}=0,    
\end{equation*}
which leads to a substantial simplification of the coefficients appearing in
the inverse matrix~\eqref{invmat}. In particular, the parameters reduce to
\begin{equation*}
\sigma_0 = -1, 
\quad \sigma_+ = \sigma_- = 1, 
\quad \nu_0 = -\tau, 
\quad \nu_+ = \nu_- = \tau,
\end{equation*}
where
\begin{equation}\label{tauerho}
\tau = \frac{1-m_{2,\omega}}{\rho_2}, 
\quad 
\rho_2 = m_{4,\omega}-m_{2,\omega}^2.    
\end{equation}

Consequently, the basis functions $\varphi_j,\psi_j$, $j=1,2,3,$ introduced in
Theorem~\ref{newth} exhibit a symmetric and compact representation
\begin{equation*}
    \varphi_j = 1-2\lambda_j, 
\quad
\psi_j = \tau \varphi_j - \frac{4}{\rho_2}\lambda_{j+1}\lambda_{j+2}, 
\quad j=1,2,3,
\end{equation*}
which considerably simplifies both the theoretical analysis and the numerical
implementation of the scheme.

\medskip
Moreover, in this case the expressions of $\tau$ and $\rho_2$ admit closed forms that
depend only on the parameter $\gamma$. Indeed, by substituting
$\alpha=\beta=\gamma$ into~\eqref{tauerho}, we obtain
\begin{equation*}
\tau = \frac{1-m_{2,\omega}}{\rho_2}
= \frac{(2\gamma+3)(2\gamma+5)}{2},
\end{equation*}
and
\begin{equation*}
\rho_2 = m_{4,\omega} - m_{2,\omega}^2
= \frac{4(\gamma+1)}{(2\gamma+3)^2(2\gamma+5)}.
\end{equation*}

\begin{remark}
The Gegenbauer-type family defines a natural one-parameter model depending only
on $\gamma$. It includes the uniform distribution on $[-1,1]$ as the special
case $\gamma=0$, while for $\gamma>0$ it smoothly interpolates between different
symmetric weighting patterns. This balance between simplicity and flexibility
makes the Gegenbauer case particularly attractive for practical applications.
\end{remark}

\section{Parameter selection for optimal reconstruction}
\label{sec3}

We now address the problem of selecting the edge--density parameters so as to
optimize the overall reconstruction accuracy on a prescribed validation set.
We present a mesh--level (global) tuning strategy based on grid search, which
is simple, robust, and easily parallelizable.

\subsection{Global tuning on the mesh: Jacobi case $(\alpha,\beta)$}
\label{sec:param-jacobi}

Let $\left\{f^{(r)}\right\}_{r=1}^{\widetilde{R}}$ be a validation collection of target functions and
$\left\{\mathcal T_n\right\}_{n=0}^{\widetilde{N}}$ a sequence of meshes on $\Omega$. For the Jacobi
edge densities $\omega_{\alpha,\beta}$, defined in~\eqref{jacobi-normalized}, with admissible
parameters $\alpha,\beta>-1$, we determine
$(\alpha^\star,\beta^\star)$ by minimizing the global validation error over a
finite candidate grid $\mathcal P\subset(-1,\infty)^2$, that is
\begin{equation}
\label{eq:globalJac}
(\alpha^\star,\beta^\star)\in
\arg\min_{(\alpha,\beta)\in\mathcal P}
\sum_{r=1}^{\widetilde{R}}\sum_{n=0}^{\widetilde{N}}
\left\| f^{(r)} -
\Pi_{2}^{\alpha,\beta}\left[f^{(r)};\mathcal T_n\right]
\right\|_{L^1(\Omega;\mathcal T_n)}.
\end{equation}
Here $\Pi_{2}^{\alpha,\beta}$ denotes the quadratic enriched operator defined in~\eqref{pi11}.
Other norms (e.g., $L^2$ or $L^\infty$) or mixed criteria can be accommodated
with no change in the algorithmic structure.

\begin{algorithm}[H]
\caption{Global mesh--level tuning for Jacobi densities}
\label{alg:grid-jacobi}
\begin{algorithmic}[1]
\Require Validation functions $\{f^{(r)}\}_{r=1}^{\widetilde{R}}$; meshes $\{\mathcal T_n\}_{n=0}^{\widetilde{N}}$;
candidate grid $\mathcal P\subset(-1,\infty)^2$ of parameter pairs $(\alpha,\beta)$;
reconstruction operator $\Pi_{2}^{\alpha,\beta}$
\Ensure Optimal parameters $(\alpha^\star,\beta^\star)$
\State $E_{\min}\gets+\infty$;\quad $(\alpha^\star,\beta^\star)\gets\text{undefined}$
\ForAll{$(\alpha,\beta)\in\mathcal P$}
  \State $E(\alpha,\beta)\gets 0$
  \For{$r=1$ to $\widetilde{R}$}
    \For{$n=0$ to ${\widetilde{N}}$}
      \State $u \gets \Pi_{2}^{\alpha,\beta}[f^{(r)};\mathcal T_n]$
      \State $E(\alpha,\beta) \gets E(\alpha,\beta) + \|f^{(r)}-u\|_{L^1(\Omega;\mathcal T_n)}$
    \EndFor
  \EndFor
  \If{$E(\alpha,\beta) < E_{\min}$}
     \State $E_{\min}\gets E(\alpha,\beta)$;\quad $(\alpha^\star,\beta^\star)\gets(\alpha,\beta)$
  \EndIf
\EndFor
\State \Return $(\alpha^\star,\beta^\star)$
\end{algorithmic}\label{alg1}
\end{algorithm}

\section{Numerical experiments}\label{sec4}
In this section we provide a set of numerical experiments aimed at assessing the practical
performance of the proposed enriched histopolation framework. The tests are performed on
the square domain $\Omega = [-1,1]^2$, employing a representative collection of benchmark
functions that display different structural features, including smoothness, oscillations,
singular behavior, and localized peaks. Specifically, we consider the following test functions
 \begin{eqnarray*}
    f_1(x,y)&=&\sqrt{x^2+y^2}, \ \ f_2(x,y)=e^{-4\left(x^2+y^2\right)}\sin\left(\pi(x+y)\right),\\
    f_3(x,y)&=&\sin(2\pi x)\sin(2\pi y),\ \, 
    f_4(x,y)=\sin\left(4\pi(x+y)\right),\ \,
    f_5(x,y)=\frac{1}{25(x^2+y^2)+1},\\ 
 f_6(x,y)&=&0.75\exp\biggl(-\frac{(9(x+1)/2-2)^2}{4}-\frac{(9(y+1)/2-2)^2}{4}\biggr)\\[-2pt]
 &&+0.75\exp\biggl(-\frac{(9(x+1)/2+1)^2}{49}-\frac{(9(y+1)/2+1)}{10}\biggr)\\[-2pt]
	&&+ 0.5\exp\biggl(-\frac{(9(x+1)/2-7)^2}{4}-\frac{(9(y+1)/2-3)^2}{4}\biggr)\\
	&&-0.2\exp\biggl(-(9(x+1)/2-4)^2-(9(y+1)/2-7)^2\biggr).
\end{eqnarray*}
The function $f_6$ is the well-known Franke function, widely used as a benchmark for approximation methods~\cite{franke1982scattered}. 
For the spatial discretization we employ a sequence of regular Friedrichs--Keller
triangulations~\cite{Knabner}, denoted by
\begin{equation*}
    \mathcal{T}_n=\left\{T_i : i=1,\dots,2(n+1)^2\right\},
\end{equation*}
so that each $\mathcal{T}_n$ consists of $2(n+1)^2$ congruent triangles; see
Figure~\ref{Fig:rec0}.
\begin{figure}
    \centering
\includegraphics[scale=0.49]{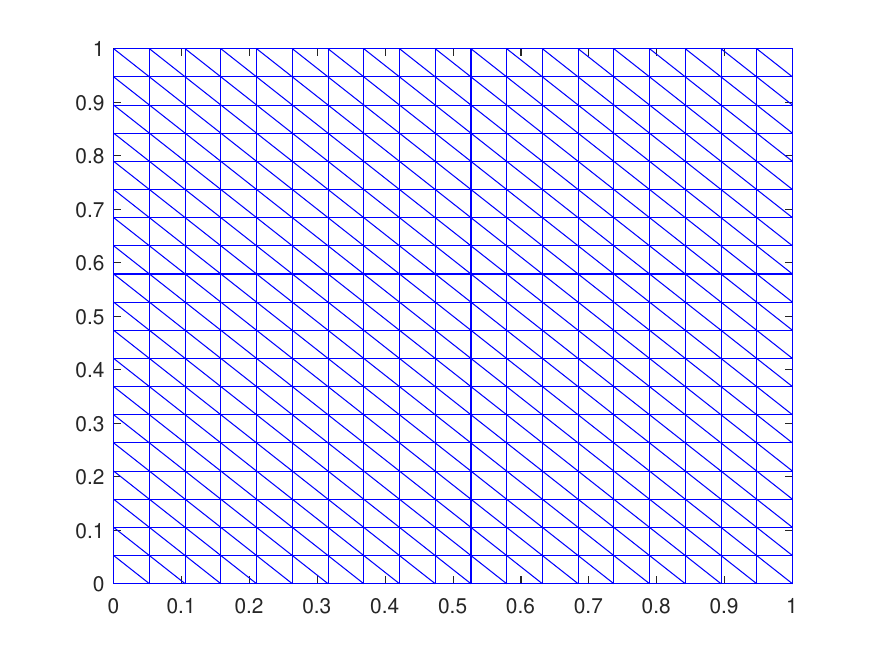}
\includegraphics[scale=0.49]{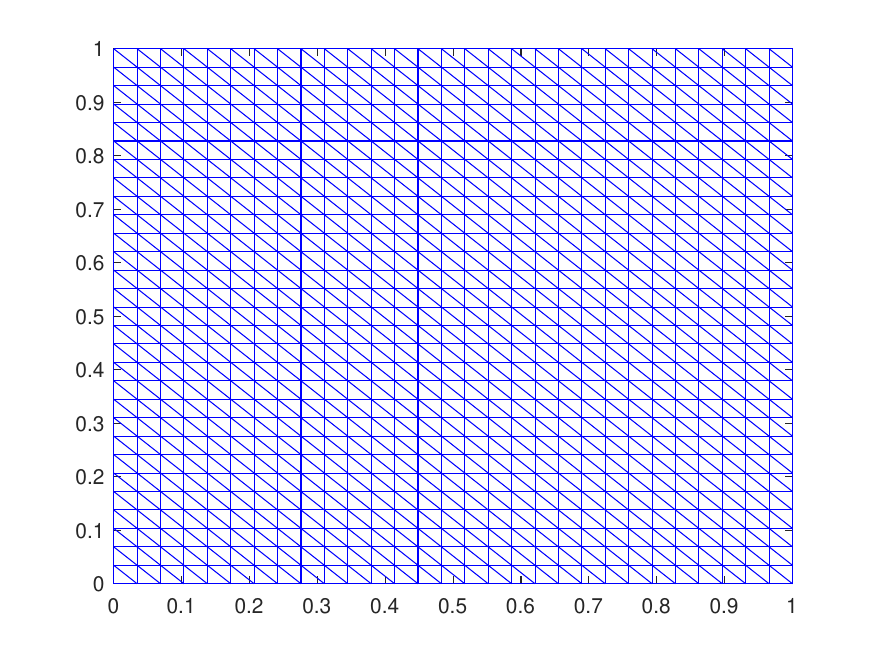}
\includegraphics[scale=0.49]{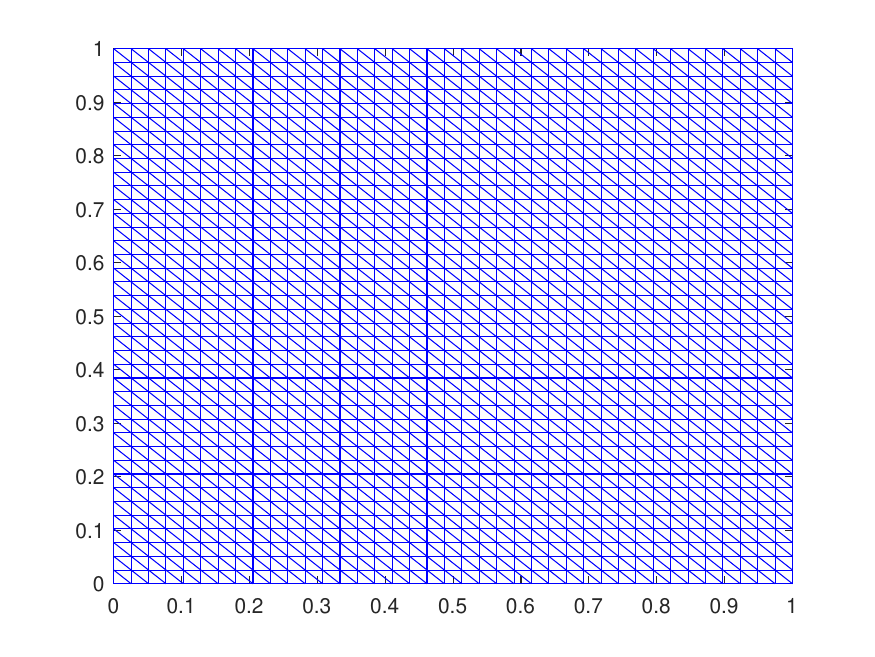}
\includegraphics[scale=0.49]{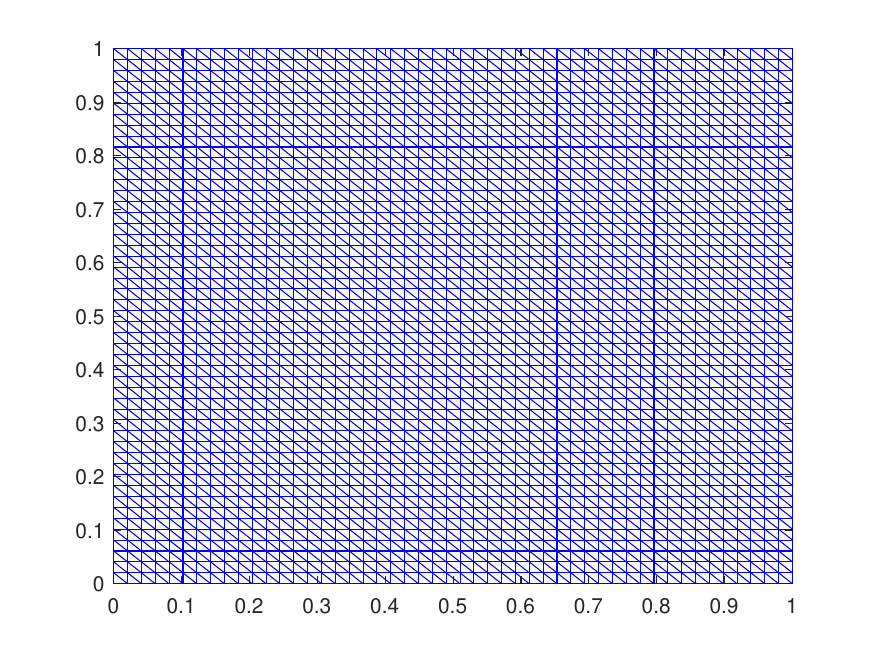}
    \caption{Examples of regular Friedrichs--Keller triangulations $\mathcal{T}_n$, for
$n=20$ (top left), $n=30$ (top right), $n=40$ (bottom left), and $n=50$ (bottom right).}
    \label{Fig:rec0}
\end{figure}
We compare the quality of the reconstruction in terms of the $L^1$ error norm of the two schemes:  
\begin{itemize}
    \item the classical piecewise linear nonconforming histopolation method
$\mathcal{CH}$, 
\item the enriched quadratic histopolation operator $\mathcal{H}^{\alpha^\star,\beta^\star}_2$,
based on Jacobi edge densities with optimally tuned parameters $(\alpha^\star,\beta^\star)$
selected through Algorithm~\ref{alg1}. 
\end{itemize}
The choice of the $L^1$ norm is motivated by its robustness in the presence of
discontinuities and localized features, without requiring additional stabilization
(cf.~\cite{dell2025truncated}). 
The results of this comparison are reported in Figure~\ref{im12}.

\begin{figure}[h]
  \centering
   \includegraphics[width=0.49\textwidth]{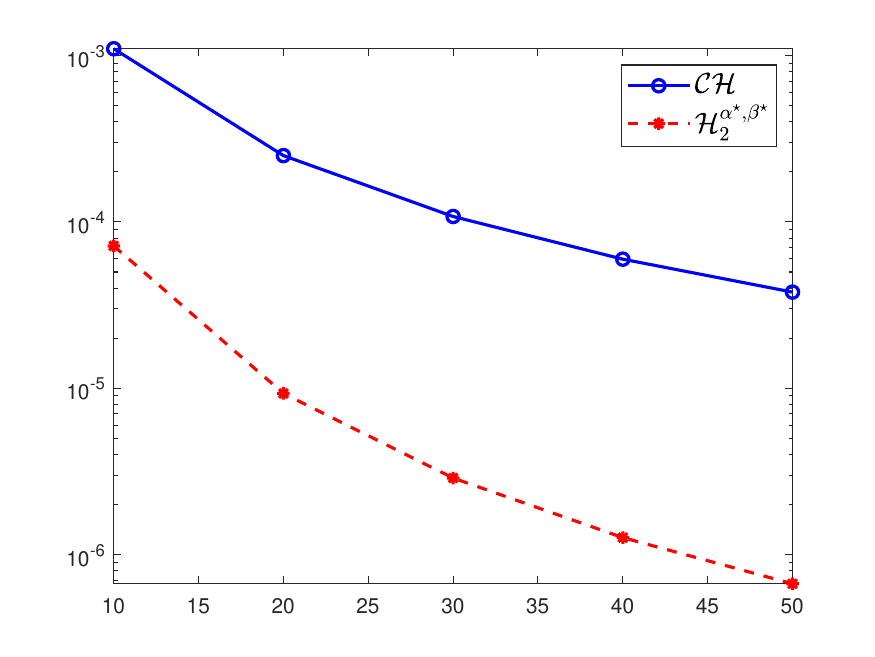} 
\includegraphics[width=0.49\textwidth]{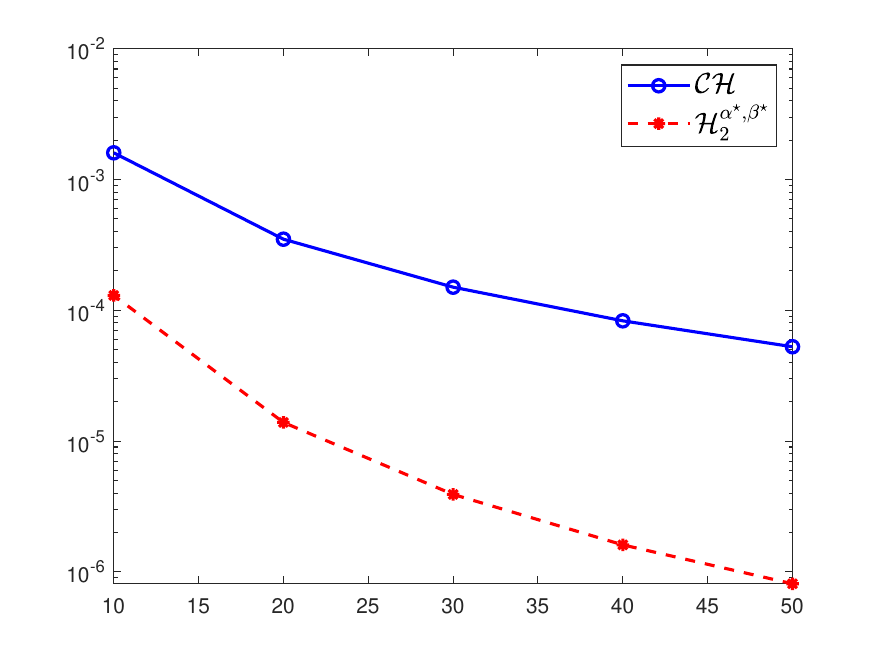} 
\includegraphics[width=0.49\textwidth]{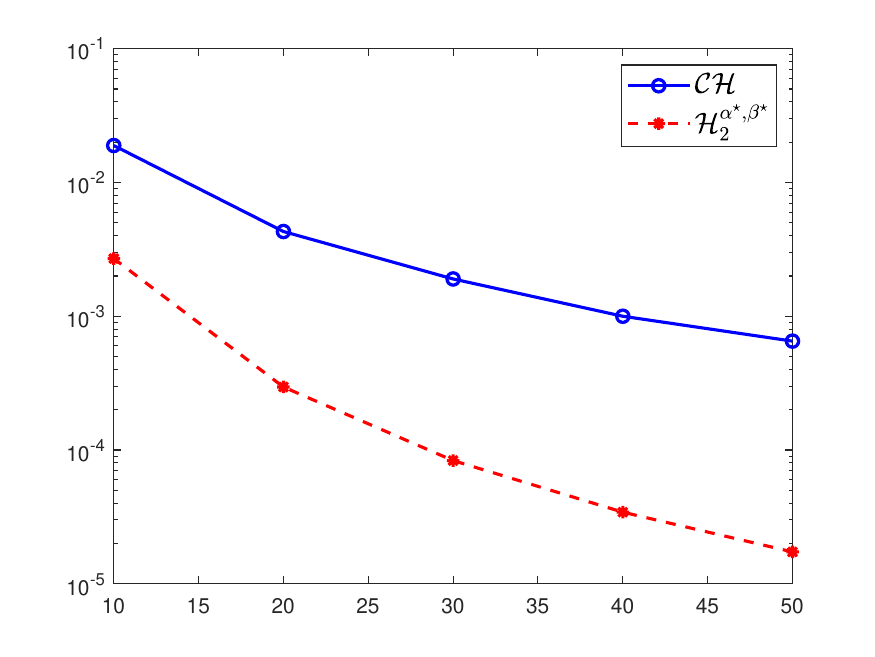} 
\includegraphics[width=0.49\textwidth]{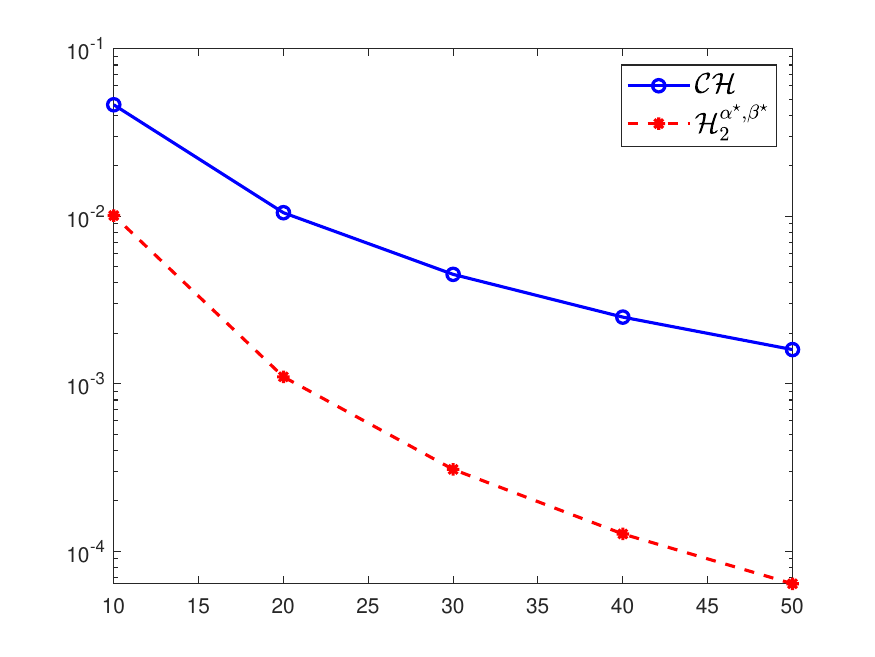}
\includegraphics[width=0.49\textwidth]{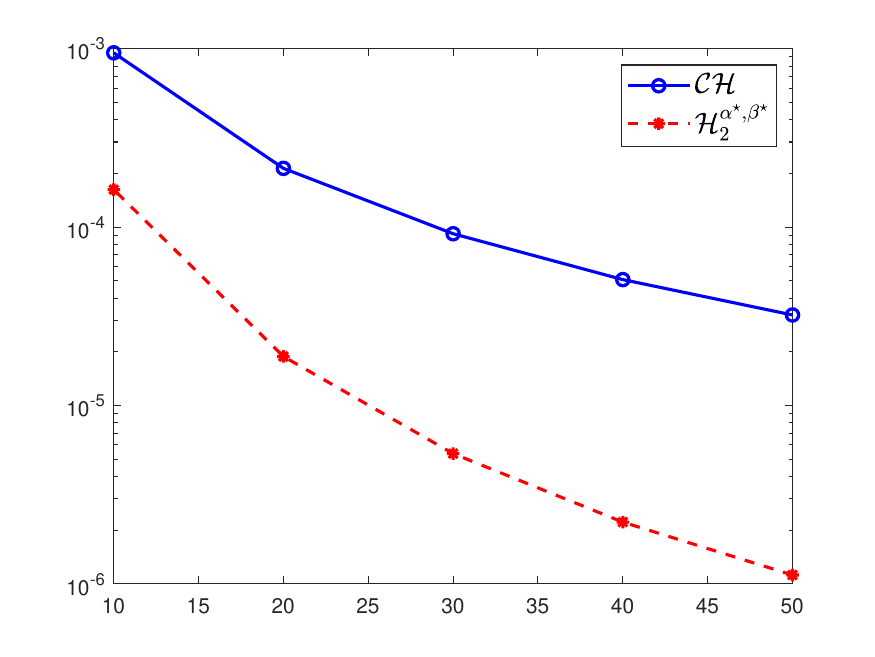} 
\includegraphics[width=0.49\textwidth]{f5.pdf}
         \caption{Semi-log plot of the $L^1$ approximation error for 
   $f_1$ (top left), $f_2$ (top right), 
   $f_3$ (middle left), $f_4$ (middle right), 
   $f_5$ (bottom left), and $f_6$ (bottom right). 
   Comparison between the classical histopolation method $\mathcal{CH}$ (blue) 
   and the enriched Jacobi-based method 
$\mathcal{H}^{\alpha^\star,\beta^\star}_2$ (red), 
   as the number of triangles in the Friedrichs--Keller triangulations increases.}
\label{im12}
\end{figure}

A clear improvement of the enriched operator over the classical scheme is consistently observed 
for all benchmark functions and mesh refinements. In particular, 
$\mathcal{H}^{\alpha^\star,\beta^\star}_2$ yields substantially smaller $L^1$ errors, 
highlighting its superior ability to reproduce local oscillations, sharp gradients, 
and singular behaviors. Furthermore, the enriched approach exhibits a faster decay of the 
approximation error under mesh refinement, confirming that the additional degrees of freedom 
introduced by the Jacobi-type probability density weights translate into a genuine 
enhancement of the approximation power.

\section{Conclusions and future works}
\label{sec5}
In this work we have developed a generalized probability density framework for local histopolation on triangular meshes, with a focus on Jacobi-type edge weights and their Gegenbauer subclass. We established the theoretical foundations of the enriched quadratic operators, proving unisolvency and deriving explicit basis functions, and we designed a parameter tuning strategy that ensures robustness and adaptivity. The numerical experiments confirmed the clear superiority of the proposed approach over the classical linear histopolation scheme, both in terms of accuracy and error decay under mesh refinement.  

Several directions of future research are currently under consideration.  On the application side, the flexibility of the Jacobi and Gegenbauer densities makes the proposed framework particularly promising for problems in imaging, inverse problems, and numerical simulations where weighted integral data naturally arise. These perspectives suggest that enriched histopolation methods based on probability densities can become a versatile tool for accurate and adaptive function reconstruction in a wide range of scientific and engineering applications.

\section*{Acknowledgments}
This research has been achieved as part of RITA \textquotedblleft Research
 ITalian network on Approximation'' and as part of the UMI group \enquote{Teoria dell'Approssimazione
 e Applicazioni}. The research was supported by GNCS–INdAM 2025 project \emph{``Polinomi, Splines e Funzioni Kernel: dall'Approssimazione Numerica al Software Open-Source''}. 
 The  research of G.V. Milovanovi\'c   was supported in part by the Serbian Academy of Sciences and Arts, Belgrade (Grant No. $\Phi$-96). 
The work of F. Nudo is funded from the European Union – NextGenerationEU under the Italian National Recovery and Resilience Plan (PNRR), Mission 4, Component 2, Investment 1.2 \lq\lq Finanziamento di progetti presentati da giovani ricercatori\rq\rq,\ pursuant to MUR Decree No.~47/2025.

\section*{Conflict of interest}
Not Applicable.

\bibliographystyle{elsarticle-num}
\bibliography{bibliografia.bib}
\end{document}